\newcommand{\e}{\frak{e}}
\newcommand{\Max}{\operatorname{Max}}
\newcommand{\Ad}{\operatorname{Ad}}
\newtheorem{theorem}{Theorem}[section]
\newtheorem{lemma}[theorem]{Lemma}
\newtheorem{corollary}[theorem]{Corollary}
\newtheorem{proposition}[theorem]{Proposition}
\newtheorem{definition}[theorem]{Definition}
\newtheorem{remark}[theorem]{Remark}
\newcommand{\Star}{\operatorname{Star}}
\newcommand{\HStar}{\operatorname{HStar}}
\newcommand{\SStar}{\operatorname{SStar}}
\begin{document}

\title[Homogeneous preserving star operations]
{On a particular subspace of homogeneous preserving star operations}
\author[P. Sahandi]
{Parviz Sahandi}

\address{Department of Pure Mathematics, Faculty of Mathematical sciences, University of Tabriz, Tabriz,
51666-15648, Iran.} \email{sahandi@ipm.ir}


\thanks{2020 Mathematics Subject Classification: 13A15, 13G05, 13A02, 13F05}
\thanks{Key Words and Phrases: graded integral domain, homogeneous preserving star operation, star operation, spectral space}

\begin{abstract}
Let $\Gamma$ be a torsionless commutative cancellative monoid,
$R=\bigoplus_{\alpha \in \Gamma}R_{\alpha}$ be a $\Gamma$-graded integral
domain. In this note we show that each homogeneous star operation $\star:\mathbf{HF}(R)\to\mathbf{HF}(R)$ of $R$, is the restriction of a (classical) star operation $\e(\star):\mathbf{F}(R)\to\mathbf{F}(R)$ of $R$, that is $\e(\star)|_{\mathbf{HF}(R)}=\star$. We also show that the set $\HStar_f(R)$ of homogeneous star operations of finite type on $R$, endowed with the Zariski topology, is a spectral space.
\end{abstract}

\maketitle

\section{Introduction}

The star operations are defined by axioms selected by Krull among the
properties satisfied by some classical operations, such as the $v$-operation,
the $t$-operation and the completion. Star operations have shown to be an essential tool in Multiplicative Ideal
Theory, allowing a new approach for characterizing several classes of integral
domains. For example, an integrally closed domain $D$ is a Pr\"{u}fer domain if
and only if $I^t = I$ for each nonzero ideal $I$ of $D$, \cite[Proposition 34.12]{g72}.

Let $R$ be an integral domain with quotient field $K$. Let $\mathbf{F}(R)$ be the set of all nonzero fractional ideals of $R$. As in \cite[Section 32]{g72}, a {\it (classical) star operation} on $R$ is a map $\star:\mathbf{F}(R)\rightarrow\mathbf{F}(R)$, $A\mapsto A^{\star}$, such that, for all $0\neq x\in K$, and for all $A, B\in\mathbf{F}(R)$, the following three properties hold:
\begin{description}
  \item[$\star_1$] $(x)^{\star}=(x)$ and $(xA)^{\star}=xA^{\star}$;
  \item[$\star_2$] $A\subseteq A^{\star}$ and $A\subseteq B \Rightarrow A^{\star}\subseteq B^{\star}$;
  \item[$\star_3$] $A^{\star\star}:=(A^{\star})^{\star}=A^{\star}$.
\end{description}
Let $\Gamma$ be a torsion-free cancellative monoid, and $R=\bigoplus_{\alpha \in \Gamma}R_{\alpha}$ be a graded integral domain. In \cite{s14,s22}, the author considered (classical) star operations (and more generally semistar operations) $\star$ on $R$ such that $\star$ sends homogeneous fractional ideals to homogeneous ones. Such a star operation is called \emph{homogenous preserving star operation}. The identity $d$, $v$, $t$, $w$ and the completion $b$ are various examples of homogenous preserving star operations (see \cite[Example 1]{s22}). The set of (classical) star operations (resp. homogeneous preserving star operations) on $R$ is denoted by $\Star(R)$ (resp. $\Star_{hp}(R)$).

Recently the notion of homogeneous star operations on graded integral domains $R=\bigoplus_{\alpha \in \Gamma}R_{\alpha}$ was introduced in \cite{k24} (see also \cite[Definition 3.4]{FO12}). Let $\mathbf{HF}(R)$ be the set of nonzero homogeneous fractional ideals of $R$. Obviously $\mathbf{HF}(R)\subseteq \mathbf{F}(R)$. Let $H$ be the set of nonzero homogeneous elements of $R$, and $R_H$ the homogeneous quotient field of $R$. A \emph{homogeneous star operation} on $R$ is a map $*:\mathbf{HF}(R)\to\mathbf{HF}(R)$, $A\mapsto A^*$ satisfying the above three conditions $\star_1, \star_2$ and $\star_3$, for all homogeneous element $x\in R_H\setminus\{0\}$ and $A, B\in \mathbf{HF}(R)$. The set of homogeneous star operations on $R$ is denoted by $\HStar(R)$.

Clearly, if $\star:\mathbf{F}(R)\rightarrow\mathbf{F}(R)$ is a homogenous preserving star operation, then the restricted mapping $\star|_{\mathbf{HF}(R)}:\mathbf{HF}(R)\to\mathbf{HF}(R)$ is a homogeneous star operation. 

\begin{definition}\label{d}
Let $\star:\mathbf{HF}(R)\to\mathbf{HF}(R)$ be a homogeneous star operation on $R$, and let $\bigstar:\mathbf{F}(R)\to\mathbf{F}(R)$ be a (classical) star operation on $R$. Then we say that \emph{$\bigstar$ is an extension of $\star$}, in case $\star=\bigstar|_{\mathbf{HF}(R)}$. In other words, $\bigstar$ is an extension of $\star$, if for each element $A\in \mathbf{HF}(R)$, $A^{\bigstar}=A^{\star}$.
\end{definition}

The purpose of this note is to prove that every homogeneous star operation $\star:\mathbf{HF}(R)\to\mathbf{HF}(R)$ of $R$, has a largest extension $\e(\star)$ of $R$. That is we show that there exists a (classical) star operation $\e(\star)$ on $R$ such that $\star:=\e(\star)|_{\mathbf{HF}(R)}$, and that $\e(\star)$ is the largest (classical) star operation on $R$ with this property. More precisely in Section 2, we show that, for each $A\in \mathbf{F}(R)$,
$$
A^{\e({\star})}:=\bigcap\{z^{-1}(\mathbf{C}(zA))^{\star}\mid 0\neq
z\in (R:A)\},
$$
is a (classical) star operation on $R$ and it is an extension of $\star$ (Theorem \ref{e(*)}). We investigate the properties of $\e(\star)$; in particular we determine the set of homogeneous maximal $(\e(\star))_f$-ideals of $R$. In Section 3, we show that the sets $\HStar(R)$ and $\Star_{hp}(R)$ can be endowed with  Zariski-like topologies, such that the injection  $$\e:\HStar(R)\to\Star_{hp}(R), \star\mapsto\e(\star),$$ is a topological embedding (Proposition \ref{top}). Hence $\HStar(R)$ can be identifiable canonically with a subspace of $\Star_{hp}(R)$. We also show that the set $\HStar_f(R)$ of finite type homogeneous star operations on $R$ endowed with the sub-space topology of the Zariski topology of $\HStar(R)$ is a spectral space (Theorem \ref{spec}).

\subsection{Star operations}\footnote{We denote the concept of star operations described in \cite[Section 32]{g72} by (classical) star operations
to distinguish it clearly from the concept of homogeneous star operations introduced in \cite{k24}.}
Let $R$ be an integral domain with quotient
field $K$. Let $\mathbf{F}(R)$ be the set of
all nonzero \emph{fractional} ideals of $R$; i.e.,
$E\in\mathbf{F}(R)$ if $E$ is an $R$-submodule of $K$ and there
exists a nonzero element $r\in R$ with $rE\subseteq R$. Let $f(R)$
be the set of all nonzero finitely generated fractional ideals of
$R$. Obviously, $f(R)\subseteq\mathbf{F}(R)$.

Let $\star$ be a (classical) star operation on $R$. For every
$A\in \mathbf{F}(D)$, put $A^{\star_f}:=\bigcup B^{\star}$,
where the union is taken over all finitely generated ideals $B\in f(D)$
with $B\subseteq A$. It is easy to see that $\star_f$ is a (classical) star
operation on $R$, and ${\star_f}$ is called the \emph{star
operation of finite type associated to} $\star$. Note that
$(\star_f)_f=\star_f$. A star operation $\star$ is said to be of
\emph{finite type} if $\star=\star_f$; in particular ${\star_f}$ is
of finite type. It is known that if $\star$ is a homogeneous preserving star operation, then $\star_f$ is a homogeneous preserving star operation (see \cite[Lemma 2.4]{s14}).

It has become standard to say that a star operation $\star$ is
{\it stable} if $(A\cap B)^{\star}=A^{\star}\cap B^{\star}$ for all
$E$, $F\in \mathbf{F}(R)$. Given a
(classical) star operation $\star$ on $R$, it is possible to construct a
(classical) star operation $\widetilde{\star}$, which is stable and of
finite type defined as follows: for each
$A\in\mathbf{F}(R)$,
$$
A^{\widetilde{\star}}:=\{x\in K|xJ\subseteq A,\text{ for some
}J\subseteq R, J\in f(R), J^{\star}=R\},
$$
(see \cite[Section 2]{ac00}, where $\widetilde{\star}$ was denoted by $\star_w$). It is known that $\widetilde{\star}$ is a homogeneous preserving star operation, for each (classical) star operation $\star$ on $R$ \cite[Proposition 1]{s22}.

The most widely studied (classical) star operations on $R$ have been the
identity $d$, $v$, $t:=v_f$, and $w=\widetilde{v}$ operations,
where $A^{v}:=(A^{-1})^{-1}$, with $F^{-1}:=(R:F):=\{x\in
K|xF\subseteq R\}$ which are homogeneous preserving star operations (see \cite[Example 1]{s22}).

If $\star_1$ and $\star_2$ are (classical) star operations on $R$, one says
that $\star_1\leq\star_2$ if $A^{\star_1}\subseteq A^{\star_2}$ for
each $A\in \mathbf{F}(R)$. This
is equivalent to saying that $(A^{\star_1})^{\star_2}=A^{\star_2}=(A^{\star_2})^{\star_1}$ for
each $A\in \mathbf{F}(R)$ (cf. \cite[Page 1623]{aa90}).
Obviously, for each star operation $\star$ defined on $R$, we
have $\star_f\leq\star$. It is well-known that $d\leq\star\leq v$ for
all (classical) star operations $\star$ on $R$.

\subsection{Graded integral domains}
Let $\Gamma$ be a nonzero torsionless commutative cancellative monoid (written
additively), and $\langle \Gamma \rangle = \{a - b \mid a,b \in \Gamma\}$ be the
quotient group of $\Gamma$; so $\langle \Gamma \rangle$ is a torsionfree abelian group.
It is known that a cancellative monoid $\Gamma$ is torsionless
if and only if $\Gamma$ can be given a total order compatible with the monoid
operation \cite[page 123]{no68}. By a $(\Gamma$-)graded integral domain  $R =\bigoplus_{\alpha \in \Gamma}R_{\alpha}$,
we mean an integral domain graded by $\Gamma$.
That is, each nonzero $x \in R_{\alpha}$ has degree $\alpha$, i.e., deg$(x) = \alpha$,  and
deg$(0) = 0$. Thus, each nonzero $f \in R$ can be written uniquely as $f = x_{\alpha_1} + \dots + x_{\alpha_n}$ with
deg$(x_{\alpha_i}) = \alpha_i$ and $\alpha_1 < \cdots < \alpha_n$.
The simplest example of a
$\Gamma$-graded integral domain is the monoid domain
$D[\Gamma]$ of $\Gamma$ over
an integral domain $D$ with $\deg(aX^{\alpha})=\alpha$ for each
$0\neq a\in D$ and $\alpha\in \Gamma$.

An element $x \in R_{\alpha}$ for $\alpha \in \Gamma$ is said
to be {\em homogeneous}. Let $H$ be the saturated multiplicative set of nonzero homogeneous
elements of $R$. Then $H = \bigcup_{\alpha \in \Gamma}(R_{\alpha} \setminus \{0\})$, and
$R_H=\bigoplus_{\alpha\in\langle\Gamma\rangle}(R_H)_{\alpha}$ is a $\langle \Gamma \rangle$-graded integral domain whose nonzero homogeneous elements are units.
We say that $R_H$ is the {\em homogeneous quotient field} of $R$. For a
fractional ideal $I$ of $R$ let $I_h$ denote the fractional ideal
generated by the set of homogeneous elements of $R$ in $I$. An integral ideal $I$ of $R$ is said
to be \emph{homogeneous} if $I=\bigoplus_{\alpha\in\Gamma}(I\cap
R_{\alpha})$; equivalently, if $I=I_h$. A fractional ideal $I$ of
$R$ is \emph{homogeneous} if $sI$ is an integral homogeneous ideal
of $R$ for some $s\in H$ (thus $I\subseteq R_H$). For $f \in R_H$, let $\mathbf{C}(f)$ denote the
fractional ideal of $R$ generated by the homogeneous components of
$f$. For a fractional ideal $I$ of $R$ with $I \subseteq R_H$, let
$\mathbf{C}(I) = \sum_{f \in I} \mathbf{C}(f)$. Clearly, both $\mathbf{C}(f)$ and $\mathbf{C}(I)$ are
homogeneous fractional ideals of $R$ such that $f \in \mathbf{C}(f)$ and $I
\subseteq \mathbf{C}(I)$; and equality holds when $I$ is homogeneous. It is known that for $f, g\in R_H$ there exists an integer $m\geq2$ such that $\mathbf{C}(g)^m\mathbf{C}(f)=\mathbf{C}(g)^{m-1}\mathbf{C}(fg)$ by \cite{no68}. For more on graded integral domains and their divisibility
properties, see \cite{aa82,no68}.

Throughout this paper, let $\Gamma$ be a nonzero torsionless commutative cancellative monoid, $R=\bigoplus_{\alpha\in\Gamma}R_{\alpha}$ be an integral domain graded by $\Gamma$ with quotient field $K$, and $H$ be the set of nonzero homogeneous elements of $R$.

\section{Extension of homogeneous star operations}

Let $R=\bigoplus_{\alpha\in\Gamma}R_{\alpha}$ be a graded integral domain. The goal of the present section is to define in a canonical way, an extension to the graded domain $R$, of a given homogeneous star operation $\star:\mathbf{HF}(R)\to\mathbf{HF}(R)$ of $R$ (see Definition \ref{d}).

\begin{theorem}\label{e(*)}
Let $R =\bigoplus_{\alpha \in \Gamma}R_{\alpha}$ be a graded integral domain with quotient field $K$, and let $\star:\mathbf{HF}(R)\to \mathbf{HF}(R)$ be a homogeneous star operation of $R$. For each $A\in \mathbf{F}(R)$, set
$$
A^{\e({\star})}:=\bigcap\{z^{-1}(\mathbf{C}(zA))^{\star}\mid 0\neq z\in (R:A)\}.
$$
Then:
\begin{enumerate}
  \item $\e({\star})$ is a (classical) star operation on $R$.
  \item $\e({\star})$ is an extension of $\star$. In fact, it is the
  largest extension of $\star$.
  \item If $\star_1\leq\star_2$ are two homogeneous star operations
  in $R$, then $\e(\star_1)\leq\e(\star_2)$.
\end{enumerate}
\end{theorem}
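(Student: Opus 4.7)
The plan is to verify axioms $\star_1$, $\star_2$, $\star_3$ directly for $\e(\star)$, and then to derive parts (2) and (3) as consequences. First the formula needs to be shown well-defined: for any $0\neq z\in(R:A)$, the product $zA$ sits in $R$, hence the homogeneous components of its elements lie in $R$, so $\mathbf{C}(zA)$ is an integral homogeneous fractional ideal and $(\mathbf{C}(zA))^{\star}$ is a well-defined element of $\mathbf{HF}(R)$; after multiplying by $z^{-1}$ and intersecting, the result is a fractional ideal of $R$ (it will contain $A$ by the inclusion checked below, hence is nonzero, and sits inside $z^{-1}R^{\star}=z^{-1}R$ for any single $z\in R\cap(R:A)$).

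For part (1), axiom $\star_1$ reduces to two routine computations: taking $A=(x)$ and evaluating the intersection at the single index $z=x^{-1}$ gives $xR^{\star}=xR=(x)$, while the change of variable $z\mapsto z/x$ identifies the defining intersection for $xA$ with $x$ times that for $A$. Axiom $\star_2$ is quick: the inclusion $A\subseteq A^{\e(\star)}$ follows from $zA\subseteq\mathbf{C}(zA)\subseteq(\mathbf{C}(zA))^{\star}$, while $A\subseteq B$ implies both $(R:B)\subseteq(R:A)$ and $\mathbf{C}(zA)\subseteq\mathbf{C}(zB)$ for $z\in(R:B)$, which together yield monotonicity.

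The main obstacle is idempotence $\star_3$. Setting $B:=A^{\e(\star)}$, the key is to show $(R:A)=(R:B)$: the inclusion $(R:B)\subseteq(R:A)$ is automatic, and for the reverse, any $z\in(R:A)$ satisfies $zB\subseteq(\mathbf{C}(zA))^{\star}\subseteq R^{\star}=R$ because the $\star$-closure of an integral homogeneous ideal is again integral. For such $z$, each element $zb$ with $b\in B$ lies in the homogeneous ideal $(\mathbf{C}(zA))^{\star}$, hence so do its homogeneous components, giving $\mathbf{C}(zB)\subseteq(\mathbf{C}(zA))^{\star}$; applying $\star$ and invoking its idempotence yields $(\mathbf{C}(zB))^{\star}\subseteq(\mathbf{C}(zA))^{\star}$, and intersecting over $z$ produces $B^{\e(\star)}\subseteq A^{\e(\star)}$.

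For part (2), when $A\in\mathbf{HF}(R)$ the plan is to pick a homogeneous $z\in H\cap(R:A)$, so that $\mathbf{C}(zA)=zA$ and the corresponding term of the intersection is exactly $A^{\star}$, proving $A^{\e(\star)}\subseteq A^{\star}$. For the reverse inclusion, given an arbitrary $z\in(R:A)$ with homogeneous decomposition $z=z_1+\cdots+z_n$, the observation is that for each homogeneous $a\in A$ the homogeneous components of $\sum z_i a$ are the $z_i a$ (grouped by degree), so $z_i A\subseteq\mathbf{C}(zA)$; applying $\star_1$ and monotonicity for the homogeneous star operation then gives $z_i A^{\star}\subseteq(\mathbf{C}(zA))^{\star}$, and summing recovers $zA^{\star}\subseteq(\mathbf{C}(zA))^{\star}$. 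The maximality assertion is precisely this same chain applied to an arbitrary classical extension $\bigstar$ of $\star$: for $z\in(R:A)$, $zA^{\bigstar}=(zA)^{\bigstar}\subseteq(\mathbf{C}(zA))^{\bigstar}=(\mathbf{C}(zA))^{\star}$, whence $A^{\bigstar}\subseteq A^{\e(\star)}$. Part (3) is then immediate from the termwise inclusion $(\mathbf{C}(zA))^{\star_1}\subseteq(\mathbf{C}(zA))^{\star_2}$, which is preserved under scalar multiplication by $z^{-1}$ and under intersection.
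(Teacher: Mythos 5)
Your proof is correct, and while it follows the same overall outline (verify $\star_1,\star_2,\star_3$, then derive (2) and (3)), the technical core is organized differently from the paper's. The paper's proof hinges on a preliminary Claim: for any integral ideal $I$ with $I\cap H\neq\emptyset$, one has $I^{\e(\star)}=(\mathbf{C}(I))^{\star}$; this is proved by invoking an auxiliary identity for homogeneous star operations, $(E+F)^{\star}=(E^{\star}+F^{\star})^{\star}$, from the literature on $\HStar(R)$, and both the extension property (2) and the idempotence $\star_3$ are then read off from this Claim (e.g.\ idempotence via $((\mathbf{C}(zA))^{\star})^{\e(\star)}=((\mathbf{C}(zA))^{\star})^{\star}=(\mathbf{C}(zA))^{\star}$). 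You instead handle idempotence directly: setting $B:=A^{\e(\star)}$, you prove $(R:B)=(R:A)$ (using that $(\mathbf{C}(zA))^{\star}\subseteq R^{\star}=R$), then show $\mathbf{C}(zB)\subseteq(\mathbf{C}(zA))^{\star}$ because the latter is homogeneous, and conclude with idempotence of $\star$ and a termwise intersection. For the extension property you also give a self-contained argument from $z_iA\subseteq\mathbf{C}(zA)$ (valid for homogeneous $A$ and $z=z_1+\cdots+z_n$), sidestepping the external reference. Your route is somewhat more elementary and self-contained; the trade-off is that the paper's Claim gives the sharper formula $I^{\e(\star)}=(\mathbf{C}(I))^{\star}$, which the author reuses later (e.g.\ in Proposition \ref{max}). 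One tiny gap to flag for the record: in your extension argument you should note why $z\in R_H$ (it follows because $A$, being a nonzero homogeneous fractional ideal, contains a nonzero homogeneous element $a$, and $za\in R$ forces $z\in a^{-1}R\subseteq R_H$); and in your $\star_1$ step for $(x)$, the evaluation at $z=x^{-1}$ only gives the upper bound $(x)^{\e(\star)}\subseteq(x)$, with the reverse supplied by the $\star_2$ inclusion $A\subseteq A^{\e(\star)}$ — you implicitly use this but should say it.
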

\begin{proof}
It can be seen from the definition that $A^{\e({\star})}\in\mathbf{F}(R)$.

\textbf{Claim.} If $I\subseteq R$ is such that $I\cap H\neq\emptyset$, then $I^{\e({\star})}=(\mathbf{C}(I))^{\star}$.

Since $I\subseteq R$, we have $1\in(R:I)$. Hence $I^{\e({\star})}\subseteq(\mathbf{C}(I))^{\star}$. For the opposite
inclusion, let $z\in(R:I)\setminus(0)$. Since $I\cap H\neq\emptyset$, we obtain that $z\in R_H$. Assume that
$z=z_{\alpha_1}+\cdots+z_{\alpha_n}$ be the decomposition of $z$
into homogeneous elements in $R_H$. Then it can be seen that
$\mathbf{C}(zI)=z_{\alpha_1}\mathbf{C}(I)+\cdots+z_{\alpha_n}\mathbf{C}(I)$.
Therefore
\begin{align*}
z^{-1}(\mathbf{C}(zI))^{\star}=&z^{-1}(z_{\alpha_1}\mathbf{C}(I)+\cdots+z_{\alpha_n}\mathbf{C}(I))^{\star} \\[1ex]
        = & z^{-1}(z_{\alpha_1}(\mathbf{C}(I))^{\star}+
\cdots+z_{\alpha_n}(\mathbf{C}(I))^{\star})^{\star} \\[1ex]
        \supseteq&
z^{-1}(z_{\alpha_1}(\mathbf{C}(I))^{\star}+
\cdots+z_{\alpha_n}(\mathbf{C}(I))^{\star}) \\[1ex]
\supseteq& z^{-1}(z(\mathbf{C}(I)^{\star})=(\mathbf{C}(I))^{\star},
\end{align*}
where the second equality is from \cite[Proposition 2.4(1)]{k24}. Thus, we have $I^{\e({\star})}\supseteq(\mathbf{C}(I))^{\star}$ and
hence $I^{\e({\star})}=(\mathbf{C}(I))^{\star}$.

(1) It follows from the Claim that $R^{\e(\star)}=R$. For $x\in K\setminus\{0\}$, and $A\in \mathbf{F}(R)$, we now show that $(xA)^{\e({\star})}=xA^{\e({\star})}$. For this, note that $x^{-1}(R:A)=(R:xA)$ for each $x\in K\setminus
\{0\}$ by \cite[Lemma 2.1]{hhp98}. Hence
\begin{align*}
(xA)^{\e({\star})}=&\bigcap\{z^{-1}(\mathbf{C}(zxA))^{\star}\mid 0\neq z\in (R:xA)\} \\[1ex]
        = & \bigcap\{z^{-1}(\mathbf{C}(zxA))^{\star}\mid 0\neq z\in x^{-1}(R:A)\} \\[1ex]
        = & \bigcap\{xy^{-1}(\mathbf{C}(yA))^{\star}\mid 0\neq y\in (R:A)\}=xA^{\e({\star})}.
\end{align*}
Thus $(xA)^{\e({\star})}=xA^{\e({\star})}$. Now, from this, we obtain that $(x)^{\e({\star})}=(xR)^{\e({\star})}=xR^{\e({\star})}=xR=(x)$ for each $x\in K\setminus\{0\}$.

Let $z\in(R:A)\setminus(0)$ and $A\in\mathbf{F}(R)$. Then $zA\subseteq \mathbf{C}(zA)\subseteq (\mathbf{C}(zA))^{\star}$, and hence $A\subseteq z^{-1}(\mathbf{C}(zA))^{\star}$. Therefore $A\subseteq A^{\e({\star})}$.

It follows from the definition that if $A\subseteq B$ are fractional ideals of $R$, then $A^{\e({\star})}\subseteq B^{\e({\star})}$.

Now, we show that for each $A\in\mathbf{F}(R)$, $(A^{\e({\star})})^{\e({\star})}=A^{\e({\star})}$. From above paragraphs $A^{\e({\star})}\subseteq(A^{\e({\star})})^{\e({\star})}$. For the opposite inclusion, let $z\in(R:A)\setminus(0)$. By what we have proved so far, we have
$z(A^{\e({\star})})^{\e({\star})}=(zA^{\e({\star})})^{\e({\star})}=((zA)^{\e({\star})})^{\e({\star})}\subseteq((\mathbf{C}(zA))^{\star})^{\e({\star})}=
((\mathbf{C}(zA))^{\star})^{\star}=(\mathbf{C}(zA))^{\star}$, whence
$(A^{\e({\star})})^{\e({\star})}\subseteq z^{-1}(\mathbf{C}(zA))^{\star}$.
Therefore $(A^{\e({\star})})^{\e({\star})}\subseteq
A^{\e({\star})}$. So that $(A^{\e({\star})})^{\e({\star})}=A^{\e({\star})}$. Therefore $\e(\star)$ is a (classical) star operation on $R$.

(2) It follows from Claim that $\e(\star)$ extends $\star$. Indeed, let $A\in\mathbf{HF}(R)$, hence $A=\mathbf{C}(A)$. Then there exists $s\in H$ such that $sA\subseteq R$. Now from Claim we have $$sA^{\e({\star})}=(sA)^{\e({\star})}=(\mathbf{C}(sA))^{\star}=(s\mathbf{C}(A))^{\star}=sA^{\star},$$ and hence $A^{\e({\star})}=A^{\star}$.

In order to show that $\e(\star)$ is the largest star operation on $R$
that extends $\star$, assume that $\bigstar$ is another (classical) star
operation on $R$ that extends $\star$, that is for each $A\in\mathbf{HF}(R)$, $A^{\bigstar}=A^{\star}$. Let $A\in\mathbf{HF}(R)$, and $z\in(R:A)\setminus(0)$. Then $zA\subseteq \mathbf{C}(zA)$ and so
$zA^{\bigstar}=(zA)^{\bigstar}\subseteq(\mathbf{C}(zA))^{\bigstar}=(\mathbf{C}(zA))^{\star}$,
whence $A^{\bigstar}\subseteq z^{-1}(\mathbf{C}(zA))^{\star}$.
Hence $A^{\bigstar}\subseteq A^{\e(\star)}$, that is
$\bigstar\leq\e(\star)$.

(3) Follows from the definition.
\end{proof}

Let $*:\mathbf{HF}(R)\to\mathbf{HF}(R)$ be a homogeneous star operation on $R$, and for each $A\in\mathbf{HF}(R)$ let $A^{*_f}:=\bigcup\{B^* \mid B$ is a finitely generated homogeneous ideal with $B\subseteq A\},$ and $A^{\widetilde{*}}:=\{x\in R_H|xJ\subseteq A,$ for some finitely generated homogeneous ideal $J$, with $J^*=R\}$. Then $*_f$ and $\widetilde{*}$ are homogeneous star operation on $R$ \cite[Remark 2.9]{k24}. Also $*$ is called \emph{of finite type} of $*=*_f$.

For a given (classical) star operation $\star$ on $R$, we can always associate a stable (classical) star operation $\overline{\star}$ by defining, for every $F\in\mathbf{F}(R)$,
$$
F^{\overline{\star}}:=\{(F:I)\mid I \text{ is a nonzero ideal of }R\text{ such that }I^{\star}=R\}.
$$
It is easy to see that $\overline{\star}\leq\star$ and, moreover, that $\overline{\star}$ is the largest stable (classical) star operation that precedes $\star$. Therefore, $\star$ is stable if and only if $\star=\overline{\star}$, \cite[Proposition 3.7, Corollary 3.9]{fh00}, and \cite[Theorem 2.6]{ac00}.

In the following proposition we show that $\overline{\star}$ is always homogeneous preserving, for any (classical) star operation $\star:\mathbf{F}(R)\to\mathbf{F}(R)$ on a graded domain $R=\bigoplus_{\alpha\in\Gamma}R_{\alpha}$.

\begin{proposition}\label{} Let $R=\bigoplus_{\alpha\in\Gamma}R_{\alpha}$ be a graded integral domain, and $\star:\mathbf{F}(R)\to\mathbf{F}(R)$ be a (classical) star operation on $R$. Then, $\overline{\star}$ is homogeneous preserving.
\end{proposition}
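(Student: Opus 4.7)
Let $F\in\mathbf{HF}(R)$. The plan is to verify the two defining conditions for $F^{\overline{\star}}$ to be a homogeneous fractional ideal: $F^{\overline{\star}}\subseteq R_H$, and closure of $F^{\overline{\star}}$ under passage to homogeneous components in the $\langle\Gamma\rangle$-grading of $R_H$. The central tool is the graded Dedekind--Mertens identity cited from Northcott in the preliminaries: for $f,g\in R_H$ there exists $m\geq 2$ with $\mathbf{C}(f)^m\mathbf{C}(g)=\mathbf{C}(f)^{m-1}\mathbf{C}(fg)$. I will also use the elementary fact that for any ideal $I$ of $R$, $I\subseteq\mathbf{C}(I)\subseteq R$ and hence $I^\star=R$ implies $\mathbf{C}(I)^\star=R$, so the homogenization of an ideal in $\mathcal{F}^\star:=\{I\mid I^\star=R\}$ is again in $\mathcal{F}^\star$.

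Fix $x\in F^{\overline{\star}}$ with $xI\subseteq F$ for some $I\in\mathcal{F}^\star$. I first show $x\in R_H$: pick a nonzero homogeneous element $a\in\mathbf{C}(I)$ (which exists since $\mathbf{C}(I)$ is a nonzero homogeneous ideal of $R$) and observe that applying Dedekind--Mertens to a suitable $c\in I$ having $a$ as one of its homogeneous components---together with $\mathbf{C}(xc)\subseteq F$, which follows from homogeneity of $F$---forces a nonzero homogeneous multiple of $x$ to lie in $F\subseteq R_H$; since homogeneous elements are units in $R_H$, this gives $x\in R_H$. Writing $x=x_{\alpha_1}+\cdots+x_{\alpha_n}$, the identity $\mathbf{C}(c)^{m(c)}\mathbf{C}(x)=\mathbf{C}(c)^{m(c)-1}\mathbf{C}(xc)\subseteq\mathbf{C}(c)^{m(c)-1}F\subseteq F$ (applied to each $c\in I$) shows that the homogeneous ideal $J:=\sum_{c\in I}\mathbf{C}(c)^{m(c)}$ satisfies $J\cdot x_{\alpha_i}\subseteq F$ for every $i$.

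To finish, I must verify $J^\star=R$, which would yield $x_{\alpha_i}\in(F:J)\subseteq F^{\overline{\star}}$. Since $c\in\mathbf{C}(c)$ implies $c^{m(c)}\in J$ for every $c\in I$, we have $I\subseteq\sqrt J$ and hence $(\sqrt J)^\star=R$. Bridging from $(\sqrt J)^\star=R$ to $J^\star=R$ is the main obstacle; I would approach it through the identity $(A^n)^\star=A^\star$ (valid when $A^\star=R$, proved by the recursion $(A^n)^\star=(A^\star\cdot A^{n-1})^\star$), applied to finitely generated subideals of $\sqrt J$ whose $\star$-closure is already $R$, and exploiting the stability of $\overline{\star}$ to combine these local conclusions. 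Once $J^\star=R$ is established, every homogeneous component of every element of $F^{\overline{\star}}$ lies in $F^{\overline{\star}}$; combined with $F^{\overline{\star}}\subseteq R_H$, this shows $F^{\overline{\star}}\in\mathbf{HF}(R)$.
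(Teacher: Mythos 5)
Your proposal stalls exactly where you say it does, and the bridge you sketch does not close the gap. From Dedekind--Mertens you only obtain $\mathbf{C}(c)^{m(c)}\mathbf{C}(x)\subseteq F$ with an exponent depending on $c$, so your witnessing ideal $J=\sum_{c\in I}\mathbf{C}(c)^{m(c)}$ is an infinite sum of varying powers, and nothing forces $J^{\star}=R$. Your plan to pass from $(\sqrt{J})^{\star}=R$ to $J^{\star}=R$ by picking finitely generated subideals $B\subseteq\sqrt{J}$ with $B^{\star}=R$ and using $B^{N}\subseteq J$, $(B^{N})^{\star}=R$, is precisely the argument that needs $\star$ to be of finite type; here $\star$ is an arbitrary (classical) star operation, and stability of $\overline{\star}$ does not manufacture the required $\star$-dense ideal (what you need is some ideal $T$ with $T^{\star}=R$ and $T\mathbf{C}(x)\subseteq F$). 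Compare with the paper's proof of Theorem \ref{2.4}(2): there the same Dedekind--Mertens-plus-powers strategy is carried out for $\widetilde{\star}$, and it succeeds only because the witnessing ideal $J=(g_1,\dots,g_n)$ is finitely generated, so a single pigeonhole exponent gives the $\star$-dense homogeneous ideal $(\mathbf{C}(g_1)+\cdots+\mathbf{C}(g_n))^{nm}$. For $\overline{\star}$ that finiteness is unavailable, so your central claim is left unproved. A smaller point: your opening step that $x\in R_H$ is circular as written, since $\mathbf{C}(x)$ and the Dedekind--Mertens identity only make sense after $x\in R_H$ is known; this is easily repaired by first multiplying by some $s\in H$ with $sF\subseteq R$, reducing to an integral homogeneous ideal, whence $F^{\overline{\star}}\subseteq s^{-1}R^{\overline{\star}}=s^{-1}R\subseteq R_H$.

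The paper's proof takes a different and much shorter route. It first performs exactly that reduction to an integral homogeneous ideal $I$ (so elements of $I^{\overline{\star}}$ lie in $R$ and the $R_H$ issue disappears), and then, for $f\in I^{\overline{\star}}$ with $fJ\subseteq I$ and $J^{\star}=R$, it asserts the containment $\mathbf{C}(f)\mathbf{C}(J)\subseteq I$ directly, so that the $\star$-dense homogeneous ideal $\mathbf{C}(J)$ itself witnesses $\mathbf{C}(f)\subseteq(I:\mathbf{C}(J))\subseteq I^{\overline{\star}}$ --- no powers, no radicals, no finiteness. Be aware that this containment is the crux and is not a formal consequence of $fJ\subseteq I$ together with homogeneity of $I$ alone: in $\mathbb{Z}[X]$ with $f=2+X$, $g=2-X$, $I=(4,X^{2})$ one has $fg\in I$ but $\mathbf{C}(f)\mathbf{C}(g)\not\subseteq I$ (of course, there $(g)^{\star}\neq R$, so this does not contradict the setting of the proposition). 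So your instinct that only power-of-content information comes for free is sound; but as it stands your proposal replaces the paper's one-line containment by an unproved claim ($J^{\star}=R$) whose suggested justification fails without a finite-type hypothesis, and hence it does not constitute a proof.
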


\begin{proof} Let $I$ be a homogenous ideal of $R$. To show that $I^{\overline{\star}}$ is homogeneous let $f\in I^{\overline{\star}}$. Then $fJ\subseteq I$ for some ideal $J$ of $R$ such that $J^{\star}=R$. Thus $R=J^{\star}\subseteq \mathbf{C}(J)^{\star}\subseteq R$, and hence $\mathbf{C}(J)^{\star}=R$, and that $\mathbf{C}(f)\mathbf{C}(J)\subseteq I$. Thus, $\mathbf{C}(f)\subseteq I^{\overline{\star}}$. This means that $I^{\overline{\star}}$ is homogeneous. Now assume that $A$ is a homogeneous fractional ideal of $R$. Then there exists $0\neq a\in H$ such that $I:=aA\subseteq R$. Hence $aA^{\overline{\star}}=I^{\overline{\star}}$ is homogeneous. Therefore $A^{\overline{\star}}=a^{-1}(aA^{\overline{\star}})$ is a homogeneous fractional ideal.
\end{proof}

We can use of the extension $\e(*)$ of a homogeneous star operation $*:\mathbf{HF}(R)\to \mathbf{HF}(R)$ to extend various properties of (classical)star operations to homogeneous star operations. In particular \cite[Proposition 2.4]{k24} can follows easily from this extension and \cite[Proposition 32.2]{g72}. See also the proof of Lemma \ref{dd} for another use of this extension. Also we can use of this construction to associate new homogeneous star operations, to a given homogeneous star operation. For example it can be seen that $*_f=\e(*)_f|_{\mathbf{HF}(R)}$ and $\widetilde{*}=\widetilde{\e(*)}|_{\mathbf{HF}(R)}$. For another new construction of homogeneous star operations see the following remark that associates a homogeneous star operation $\overline{*}$ to a given homogeneous star operation $*:\mathbf{HF}(R)\to \mathbf{HF}(R)$.

\begin{remark}\label{stable}{\em
Let $R =\bigoplus_{\alpha \in \Gamma}R_{\alpha}$ be a graded integral domain, let $\star:\mathbf{HF}(R)\to\mathbf{HF}(R)$ be a homogeneous star operation of $R$. Then the homogeneous star operation $\overline{\star}:=\overline{\e(\star)}|_{\mathbf{HF}(R)}:\mathbf{HF}(R)\to\mathbf{HF}(R)$ is stable, that is for any $A, B\in\mathbf{HF}(D)$, we have $(A\cap B)^{\overline{\star}}=A^{\overline{\star}}\cap B^{\overline{\star}}$. In fact for every $A\in\mathbf{HF}(D)$,
$$
A^{\overline{\star}}=\{(A:I)\mid I \text{ is a nonzero homogeneous ideal of }D\text{ such that }I^{\star}=D\}.
$$
That the right hand side in a subset of $A^{\overline{\star}}$ is clear. For the opposite inclusion let $a\in A^{\overline{\star}}$ be a homogeneous element. Then there exists an ideal $I\subseteq R$ such that $I^{\e(\star)}=R$ and $aI\subseteq A$. Hence $R=I^{\e(\star)}\subseteq\mathbf{C}(I)^{\star}\subseteq R$; so $\mathbf{C}(I)^{\star}=R$, and that $a\mathbf{C}(I)\subseteq A$. Thus $a$ belongs to the right hand side.}
\end{remark}

\begin{theorem}\label{2.4}
Let $R =\bigoplus_{\alpha \in \Gamma}R_{\alpha}$ be a graded integral domain, let $\star:\mathbf{HF}(R)\to\mathbf{HF}(R)$ be a homogeneous star operation of $R$, and let $\e(\star)$ be the extension of $\star$ to $R$ introduced in Theorem \ref{e(*)}. Then:
\begin{enumerate}
  \item $(\e(\star))_f=(\e(\star_f))_f$ is the largest finite type extension of $\star_f$.
  \item $\widetilde{\e(\star)}=\widetilde{\e(\widetilde{\star})}$ is the largest stable and finite type extension of $\widetilde{\star}$.
  \item $\overline{\e(\star)}=\overline{\e(\overline{\star})}$ is the largest stable extension of $\overline{\star}$.
\end{enumerate}
\end{theorem}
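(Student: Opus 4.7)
My plan is to treat all three parts by a common template. Writing $\sigma$ for any of $(\cdot)_f$, $\widetilde{\cdot}$, $\overline{\cdot}$, the candidate classical operation is $\sigma(\e(\star))$. It is automatically of the required kind from the definition of $\sigma$, and homogeneous preserving by \cite[Lemma~2.4]{s14}, \cite[Proposition~1]{s22}, and the preceding Proposition respectively. What must be proved is that $\sigma(\e(\star))$ extends $\sigma(\star)$, that it is the largest classical operation of that kind extending $\sigma(\star)$, and that $\sigma(\e(\star))=\sigma(\e(\sigma(\star)))$.

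For the extension step, one inclusion is immediate from Theorem~\ref{e(*)}(2): any homogeneous test ideal $J\subseteq R$ used to define $\sigma(\star)$ satisfies $J^{\e(\star)}=J^\star=R$, hence is admissible in the classical definition of $\sigma(\e(\star))$. For the reverse inclusion (trivial in part (3), by the definition of $\overline{\star}$ in Remark~\ref{stable}), I would argue case by case. For $(\cdot)_f$, given a homogeneous $A$ and a finitely generated $B\subseteq A$, the content trick $B\subseteq\mathbf{C}(B)\subseteq A$ together with Theorem~\ref{e(*)}(2) yields $B^{\e(\star)}\subseteq\mathbf{C}(B)^{\e(\star)}=\mathbf{C}(B)^\star\subseteq A^{\star_f}$. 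For $\widetilde{\cdot}$, since $\widetilde{\e(\star)}$ is homogeneous preserving, it suffices to show $x\in A^{\widetilde{\star}}$ for every homogeneous $x\in A^{\widetilde{\e(\star)}}$; writing $xJ\subseteq A$ with $J\in f(R)$, $J\subseteq R$ and $J^{\e(\star)}=R$, the homogeneity of $x$ ensures that the $xg_\alpha$ (for homogeneous components $g_\alpha$ of generators $g$ of $J$) are themselves the homogeneous components of $xg$, hence lie in $A$, so that $x\mathbf{C}(J)\subseteq A$; combining with $\mathbf{C}(J)^\star=R$ (which follows from $J^{\e(\star)}=R$ and $\mathbf{C}(J)\subseteq R$, as in Remark~\ref{stable}), one gets $x\in A^{\widetilde{\star}}$.

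The maximality then follows from Theorem~\ref{e(*)}(2): any classical star operation $\bigstar$ of the required kind extending $\sigma(\star)$ satisfies $\bigstar\le\e(\sigma(\star))$, hence $\bigstar=\sigma(\bigstar)\le\sigma(\e(\sigma(\star)))$. The identity $\sigma(\e(\star))=\sigma(\e(\sigma(\star)))$ then follows at once: the inequality $\sigma(\e(\sigma(\star)))\le\sigma(\e(\star))$ comes from $\sigma(\star)\le\star$ combined with Theorem~\ref{e(*)}(3) and monotonicity of $\sigma$, while the reverse is exactly the case $\bigstar=\sigma(\e(\star))$ of the maximality just proved, since that operation is itself of the required kind and extends $\sigma(\star)$ by the previous step.

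The main obstacle I anticipate is the extension step in part (2): one must carefully use the content of $J$ together with the homogeneity of $x$ to conclude $x\mathbf{C}(J)\subseteq A$, and the reduction to homogeneous $x$ is essential (it depends on $\widetilde{\e(\star)}$ being homogeneous preserving). Once this content-plus-homogeneity argument is in place, the remaining assertions are routine applications of the universal property of $\e$ from Theorem~\ref{e(*)} together with the monotonicity of the constructions $(\cdot)_f$, $\widetilde{\cdot}$, $\overline{\cdot}$.
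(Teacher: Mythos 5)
Your proposal is correct and follows the same overall architecture as the paper's proof (extension step, the identity $\sigma(\e(\star))=\sigma(\e(\sigma(\star)))$, and maximality via Theorem~\ref{e(*)}(2)). The genuine difference is in the extension step of part~(2). The paper handles an arbitrary $f\in A^{\widetilde{\e(\star)}}$ directly, invoking the content formula $\mathbf{C}(g_i)^m\mathbf{C}(f)=\mathbf{C}(g_i)^{m-1}\mathbf{C}(fg_i)$ from Northcott/\cite{hhp98} and a somewhat delicate bookkeeping of the exponent (the ideal $J_0:=(\mathbf{C}(g_1)+\cdots+\mathbf{C}(g_n))^{nm}$). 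You instead observe that $\widetilde{\e(\star)}$ is homogeneous preserving by \cite[Proposition~1]{s22}, so $A^{\widetilde{\e(\star)}}$ is a homogeneous fractional ideal and it suffices to test homogeneous elements $x$; for homogeneous $x$ with $xJ\subseteq A$, the homogeneous components of $xg$ are exactly $xg_\alpha$, so $x\mathbf{C}(J)\subseteq A$ and $\mathbf{C}(J)^{\star}=R$ follows at once from $J\subseteq\mathbf{C}(J)\subseteq R$ and $J^{\e(\star)}=R$. This is more elementary and avoids Dedekind--Mertens entirely, at the cost of invoking the external fact that $\widetilde{\tau}$ is always homogeneous preserving; the paper's route is self-contained modulo the content lemma. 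Your reordering (proving maximality for general $\bigstar$ first and then obtaining the identity as a special case, rather than proving the identity first) is a minor stylistic change. One small point worth stating explicitly in the $(\cdot)_f$ case: you need $\mathbf{C}(B)$ finitely generated, which holds because $B$ is finitely generated and $\mathbf{C}(B)=\sum_i\mathbf{C}(f_i)$ for generators $f_1,\ldots,f_n$ of $B$; the paper writes this step out.
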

\begin{proof}
(1) Since $\e(\star)$ is an extension of $\star$, it follows that
$(\e(\star))_f$ is an extension of $\star_f$. Indeed, let $A\in
\mathbf{HF}(R)$ and $a\in A^{\star_f}$. Then there exists a
finitely generated homogeneous fractional ideal $B\subseteq A$ such
that $a\in B^{\star}=B^{\e(\star)}\subseteq A^{\e(\star)}$. Hence
$A^{\star_f}\subseteq A^{(\e(\star))_f}$. For the opposite inclusion
let $a\in A^{(\e(\star))_f}$. Then there exists a finitely generated
fractional ideal $B\subseteq A$ such that $a\in
B^{\e(\star)}\subseteq A^{\star}$. Assume that $B=(f_1,\ldots,f_n)$. Then $B\subseteq B':=\sum_{i=1}^n\mathbf{C}(f)$, and $B'$ is a finitely generated homogeneous fractional ideal of $R$ such that $B'\subseteq A$  and that $a\in(B')^{\e(\star)}=(B')^{\star}$. Hence $a\in A^{\star_f}$. Then $A^{\star_f}\supseteq A^{(\e(\star))_f}$, i.e.,
$A^{\star_f}=A^{(\e(\star))_f}$. Therefore $(\e(\star))_f$ extends
$\star_f$. By Theorem \ref{e(*)}(2), $(\e(\star))_f\leq\e(\star_f)$ and
hence $(\e(\star))_f\leq(\e(\star_f))_f$. Since the opposite
inequality is obvious, we have $(\e(\star))_f=(\e(\star_f))_f$. Now,
let $\bigstar$ be a finite type extension of $\star_f$, then
$\bigstar\leq\e(\star_f)$ by Theorem \ref{e(*)}, and hence
$\bigstar=\bigstar_f\leq(\e(\star_f))_f=(\e(\star))_f$.

(2) We will show first that $\widetilde{\e(\star)}$ is an extension
of $\widetilde{\star}$. Let $A\in \mathbf{HF}(R)$ and $f\in
A^{\widetilde{e(\star)}}$. Then $fJ\subseteq A$ for some finitely
generated ideal $J\subseteq R$ such that $J^{\e(\star)}=R$. Suppose that
$J=(g_1,\cdots,g_n)$. Using \cite{hhp98}, there is an
integer $m\geq2$ such that $\mathbf{C}(g_i)^m\mathbf{C}(f)=\mathbf{C}(g_i)^{m-1}\mathbf{C}(fg_i)$ for
all $i=1,\cdots,n$. Since $A$ is homogeneous and $fg_i\in A$, we
have $\mathbf{C}(fg_i)\subseteq A$. Thus we have $\mathbf{C}(g_i)^m\mathbf{C}(f)\subseteq A$.
Since $J\subseteq(\mathbf{C}(g_1)+\cdots+\mathbf{C}(g_n))$ we have
$(\mathbf{C}(g_1)+\cdots+\mathbf{C}(g_n))^{\star}=(\mathbf{C}(g_1)+\cdots+\mathbf{C}(g_n))^{\e(\star)}=R$.
Put $J_0:=(\mathbf{C}(g_1)+\cdots+\mathbf{C}(g_n))^{nm}$. Thus $J_0$ is a finitely
generated homogeneous ideal of $R$ such that $J^{\star}_0=R$ and
that $\mathbf{C}(f)J_0\subseteq A$, hence $\mathbf{C}(f)\subseteq
A^{\widetilde{\star}}$. This means that
$A^{\widetilde{\e(\star)}}\subseteq A^{\widetilde{\star}}$. For the
opposite inclusion assume that $f\in A^{\widetilde{\star}}$. Then
$fJ\subseteq A$ for some finitely generated homogeneous ideal $J$ of $R$ such
that $J^{\star}=R$. But $J^{\e(\star)}=J^{\star}$. Thus
$A^{\widetilde{\e(\star)}}\supseteq A^{\widetilde{\star}}$, and
hence $A^{\widetilde{\e(\star)}}=A^{\widetilde{\star}}$. Therefore
$\widetilde{\e(\star)}$ is an extension of $\widetilde{\star}$. By Theorem \ref{e(*)}(2),
$\widetilde{\e(\star)}\leq\e(\widetilde{\star})$ and hence
$\widetilde{\e(\star)}\leq\widetilde{\e(\widetilde{\star})}$. Since
the opposite inclusion is obvious, we have
$\widetilde{\e(\star)}=\widetilde{\e(\widetilde{\star})}$. Let
$\bigstar$ be a stable finite-type (classical) star operation on $R$
that extends $\widetilde{\star}$. Then
$\bigstar\leq\e(\widetilde{\star})$ and hence
$\bigstar=\widetilde{\bigstar}\leq\widetilde{\e(\widetilde{\star})}=
\e(\widetilde{\star})$.

(3) Since $\e(\star)$ is an extension of $\star$, it follows that $\overline{\e(\star)}$ is an extension of $\overline{\star}$ by Remark \ref{stable}.
Then by Theorem \ref{e(*)}(2), $\overline{\e(\star)}\leq\e(\overline{\star})$ and hence $\overline{\e(\star)}\leq\overline{\e(\overline{\star})}$. Since the opposite
inequality is obvious, we have $\overline{\e(\star)}=\overline{\e(\overline{\star})}$. Now, let $\bigstar$ be a stable extension of $\overline{\star}$, then $\bigstar\leq\e(\overline{\star})$ by Theorem \ref{e(*)}, and hence $\bigstar=\overline{\bigstar}\leq\overline{\e(\overline{\star})}=\overline{\e(\star)}$.
\end{proof}

In the following corollary, for a homogeneous preserving star operation $\star:\mathbf{F}(R)\to\mathbf{F}(R)$, we still denote by $\star$, the homogeneous star operation $\star|_{\mathbf{HF}(R)}$.

\begin{corollary}\label{2.3}
Let $R =\bigoplus_{\alpha \in \Gamma}R_{\alpha}$ be a graded
integral domain, and let $v$, $t$ and $w$ be the $v$-star
operation, $t$-star operation and $w$-star operation on $R$
respectively. Then:
\begin{enumerate}
  \item $\e(v)=v$.
  \item $(\e(v))_f=(\e(t))_f=t$.
  \item $\widetilde{\e(v)}=\widetilde{\e(w)}=w$.
\end{enumerate}
\end{corollary}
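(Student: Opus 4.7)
The plan is to establish part (1) by sandwiching $\e(v)$ between $v$ and $v$ itself, and then to derive parts (2) and (3) as formal consequences of Theorem \ref{2.4} combined with part (1). No part requires a calculation more elaborate than chasing the definitions; the substantive content is already contained in the existing results.

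For (1), I would reason as follows. Because $v$ is a homogeneous preserving (classical) star operation, the restriction $v|_{\mathbf{HF}(R)}$ is a homogeneous star operation on $R$, and $v$ itself is an extension of it in the sense of Definition \ref{d}. By Theorem \ref{e(*)}(2), $\e(v)$ is the \emph{largest} (classical) star operation extending $v|_{\mathbf{HF}(R)}$, so $v \leq \e(v)$. Conversely, $\e(v)$ is a (classical) star operation on $R$, and since the $v$-operation dominates every classical star operation (the inequality $\star \leq v$ recalled in Section 1), we have $\e(v) \leq v$. Combining yields $\e(v) = v$.

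For (2), applying Theorem \ref{2.4}(1) with $\star := v|_{\mathbf{HF}(R)}$ and using $t = v_f$ gives $(\e(v))_f = (\e(v_f))_f = (\e(t))_f$. By part (1), $(\e(v))_f = v_f = t$, so $(\e(v))_f = (\e(t))_f = t$. For (3), applying Theorem \ref{2.4}(2) with the same $\star$ and using $w = \widetilde{v}$ yields $\widetilde{\e(v)} = \widetilde{\e(\widetilde{v})} = \widetilde{\e(w)}$; part (1) then forces $\widetilde{\e(v)} = \widetilde{v} = w$, whence $\widetilde{\e(v)} = \widetilde{\e(w)} = w$.

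The only delicate point, and the one worth flagging in the write-up, is the deliberate notational overload announced just before the statement: on the left of each equation the symbols $v$, $t$, $w$ denote the corresponding homogeneous star operations (restrictions to $\mathbf{HF}(R)$), whereas on the right they denote the underlying classical star operations. Once this convention is kept straight, there is no genuine obstacle.
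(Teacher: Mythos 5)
Your argument is correct and is essentially the paper's own: for (1), you sandwich $\e(v)$ between $v$ and $v$ using that $v$ is both the largest classical star operation and an extension of the homogeneous star operation $v|_{\mathbf{HF}(R)}$, then compare with the maximality of $\e(v)$ from Theorem \ref{e(*)}(2), while (2) and (3) follow mechanically from Theorem \ref{2.4} together with part (1). The notational care you flag about overloading $v$, $t$, $w$ for the classical operations and their restrictions to $\mathbf{HF}(R)$ is exactly the convention the paper announces immediately before the corollary.
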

\begin{proof}
(1) Observe that $v$ is the largest (classical) star operation that is also an extension of the homogeneous star operation $v=v|_{\mathbf{HF}(R)}:\mathbf{HF}(R)\to\mathbf{HF}(R)$. Hence by Theorem \ref{e(*)} we obtain that $\e(v)=v$. Now (2) and (3) are clear from (1) and
Theorem \ref{2.4}.
\end{proof}

Recall that a nonzero fractional ideal $I$ of $R$ is called \emph{$v$-invertible} if $(II^{-1})^{v}=R$ and a domain $R$ is a \emph{Pr\"{u}fer $v$-multiplication domain (for short, P$v$MD)} if every finitely generated ideal of $R$ is $v$-invertible. Pr\"{u}fer $v$-multiplication domains generalize at the same time Pr\"{u}fer and and Krull domains (\cite{gr67} and \cite{mz81}). It is well-known that a graded domain $R =\bigoplus_{\alpha \in \Gamma}R_{\alpha}$ is a P$v$MD if and only if $I$ is $v$-invertible for each homogeneous ideal of $R$ \cite[Theorem 6.4]{aa82}. Recall that a graded domain $R$ is called a \emph{graded-Pr\"{u}fer domain} if each nonzero finitely generated homogeneous ideal of $R$ is invertible (see \cite{ac13}). Hence, it can be seen that every graded-Pr\"{u}fer domain is a P$v$MD. Since every P$v$MD is integrally closed, we see that every graded-Pr\"{u}fer domain is integrally closed.

Let $D$ be an integral domain and $\{D_{\alpha}\}$ be a family of overrings of $D$ such that $D=\cap_{\alpha}D_{\alpha}$. Then the map $*:\mathbf{F}(D)\to\mathbf{F}(D)$ defined by $F^*:=\cap_{\alpha}FD_{\alpha}$, for each $F\in\mathbf{F}(D)$ is a (classical) star operation \cite[Theorem 32.5]{g72}. A particular case is the $b:=b_D$-operation defined by $F^b:=\cap_{\alpha}FV_{\alpha}$, for each $F\in\mathbf{F}(D)$, where $\{V_{\alpha}\}$ is the class of all valuation overrings of $D$, see \cite[Page 398]{g72}. The $b$-operation is a (classical) star operation, only in the case that $D$ is integrally closed, but it is always a semistar operation. It is shown that the $b$-operation is a homogeneous preserving semistar operation \cite[Corollary 2]{s22}; and in particular, if $R =\bigoplus_{\alpha \in \Gamma}R_{\alpha}$ is an integrally closed graded domain, it is a homogeneous preserving star operation, when restricted to fractional ideals.

\begin{remark}\label{}{\em
(1) Let $R =\bigoplus_{\alpha \in \Gamma}R_{\alpha}$ be a graded
integral domain. Assume that $\star_1$ and $\star_2$ are two homogeneous preserving star operations on $R$ such that $\star_1|_{\mathbf{HF}(R)}=\star_2|_{\mathbf{HF}(R)}$. Then $\e(\star_1)=\e(\star_2)$.

(2) Note that $d\neq(\e(d))_f$ and $b\neq(\e(b))_f$. Let $R$ be a graded-Pr\"{u}fer domain which is not Pr\"{u}fer (e.g. $R=D[X,X^{-1}]$ for a Pr\"{u}fer domain $D$ which is not a field  and an indeterminate $X$ over $D$ \cite[Example 3.6]{ac13}). Since $R$ is integrally closed and is not a Pr\"{u}fer domain, there exists an ideal $I$ of $R$ such that $I^{t}\neq I$ by \cite[Proposition 34.12]{g72}. Hence $t\neq d$. However $d|_{\mathbf{HF}(R)}=t|_{\mathbf{HF}(R)}$ since $R$ is a graded-Pr\"{u}fer domain \cite[Corollary 3.4]{s18}. Therefore $\e(d)=\e(t)$ and hence $(\e(d))_f=(\e(t))_f=t\neq d$. For the second case, we note that $b|_{\mathbf{HF}(R)}=d|_{\mathbf{HF}(R)}$ by \cite[Corollary 3.8]{s18}. Assume to the contrary that $b=(\e(b))_f$. Then $b=(\e(b))_f=(\e(d))_f=t$. Since $R$ is a graded-Pr\"{u}fer domain, we have $R$ is a P$v$MD. Therefore $R$ is a Pr\"{u}fer domain by \cite[Proposition 9]{fp11}, which is a contradiction.
}
\end{remark}

Let $\star:\mathbf{F}(R)\to\mathbf{F}(R)$ (resp. $\star:\mathbf{HF}(R)\to\mathbf{HF}(R)$) be a (classical) star operation (resp. homogeneous star operation) on $R$. An ideal (resp. homogeneous ideal) $I$ of $R$ is called a \emph{$\star$-ideal} if $I^{\star}=I$. A $\star$-ideal (resp homogeneous $\star$-ideal) of $R$ is called a {\em maximal $\star$-ideal} (resp. {\em homogeneous maximal $\star$-ideal}) if it is maximal among proper (resp. proper homogeneous) $\star$-ideals of $R$. Denote by $\Max^{\star}(R)$ (resp. $h\text{-}\Max^{\star}(R)$) the set of maximal (resp. homogeneous maximal) $\star$-ideals of $R$. It is easy to see that each proper homogeneous $\star$-ideal of $R$ is contained in a homogeneous maximal $\star$-ideal of $R$. Also, it is easy to see that a homogeneous maximal ideal need not be a maximal ideal, while a homogeneous maximal $t$-ideal is a maximal $t$-ideal \cite[Lemma 1.2]{ac05}. Since we have $(\e(t))_f=t$ by Corollary \ref{2.3}, we can consider the following proposition as a generalization of \cite[Lemma 1.2]{ac05}.

\begin{proposition}\label{max}
Let $R =\bigoplus_{\alpha \in \Gamma}R_{\alpha}$ be a graded integral domain and $\star:\mathbf{HF}(R)\to\mathbf{HF}(R)$ be a homogeneous star operation on $R$. Then each maximal $(\e(\star_f))_f$-ideal of $R$ intersecting $H$ is homogeneous.
\end{proposition}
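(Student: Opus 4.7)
The plan is to exploit the explicit formula from the Claim inside the proof of Theorem \ref{e(*)}, together with the standard observation that on any finitely generated fractional ideal $B$ every classical star operation $\tau$ satisfies $B^{\tau}=B^{\tau_f}$. The target is to prove that every $f\in M$ has $\mathbf{C}(f)\subseteq M$; this forces $f\in M_h$, where $M_h$ denotes the (homogeneous) ideal generated by the homogeneous elements of $M$, and consequently $M=M_h$ is homogeneous.

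First I would reduce the operation in the statement to a more convenient one by setting $\diamond:=(\e(\star))_f$, since by Theorem \ref{2.4}(1) one has $(\e(\star_f))_f=(\e(\star))_f$. Then, given a maximal $\diamond$-ideal $M$ and a nonzero $x\in M\cap H$, I would fix an arbitrary $f\in M$ and introduce the \emph{key} finitely generated ideal
\[
J:=(x,f)\subseteq M.
\]
The point of combining $x$ with $f$ is that, although $f$ alone need not satisfy any homogeneity hypothesis, the ideal $J$ automatically fulfills $J\subseteq R$ and $J\cap H\neq\emptyset$ (it contains $x$), which is exactly what is needed to invoke the Claim from Theorem \ref{e(*)}.

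Applying that Claim to $J$ gives $J^{\e(\star)}=(\mathbf{C}(J))^{\star}$. A brief check shows $\mathbf{C}(J)=\mathbf{C}(x)+\mathbf{C}(f)=(x)+\mathbf{C}(f)$, because $\mathbf{C}(rx+sf)\subseteq\mathbf{C}(r)(x)+\mathbf{C}(s)\mathbf{C}(f)\subseteq(x)+\mathbf{C}(f)$ for all $r,s\in R$. Since $J$ is finitely generated, $J^{\e(\star)}=J^{(\e(\star))_f}=J^{\diamond}$. The inclusion $J\subseteq M$ then yields
\[
\bigl((x)+\mathbf{C}(f)\bigr)^{\star}=J^{\diamond}\subseteq M^{\diamond}=M,
\]
and in particular $\mathbf{C}(f)\subseteq M$. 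So every homogeneous component of $f$ lies in $M$, hence in $M_h$, and therefore $f\in M_h$. Since $f\in M$ was arbitrary, $M\subseteq M_h\subseteq M$, so $M=M_h$ is homogeneous.

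The only real obstacle is spotting the right ideal to plug into the Claim: one needs to attach $x$ to $f$ so that $J=(x,f)$ satisfies the hypothesis $J\cap H\neq\emptyset$. Once that move is made, everything collapses into a direct comparison of the formula from Theorem \ref{e(*)} with the fact that $M$ is a $\diamond$-ideal.
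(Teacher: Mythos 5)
Your proof is correct, and it takes a genuinely different (and in fact cleaner) route than the paper's. The paper argues by contradiction: assuming $M\subsetneq\mathbf{C}(M)$, it uses the \emph{maximality} of $M$ to force $\mathbf{C}(M)^{\star_f}=R$, extracts $f_1,\ldots,f_n\in M$ with $\mathbf{C}(f_1,\ldots,f_n)^{\star_f}=R$, and then unwinds the defining intersection for $(a,f_1,\ldots,f_n)^{\e(\star_f)}$ to conclude $R\subseteq M$. You instead argue directly, one element at a time: for $f\in M$ and $x\in M\cap H$, the ideal $J=(x,f)$ meets $H$, so the Claim gives $J^{\e(\star)}=\mathbf{C}(J)^{\star}=\bigl((x)+\mathbf{C}(f)\bigr)^{\star}$; since $J$ is finitely generated, $J^{\e(\star)}=J^{(\e(\star))_f}\subseteq M^{(\e(\star))_f}=M$, whence $\mathbf{C}(f)\subseteq M$. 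Your identification $\mathbf{C}\bigl((x,f)\bigr)=(x)+\mathbf{C}(f)$ and the reduction $(\e(\star_f))_f=(\e(\star))_f$ via Theorem \ref{2.4}(1) are both correct.

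Two observations on what each version buys. Your proof never actually invokes the maximality of $M$ beyond the fact that $M^{(\e(\star_f))_f}=M$; so it establishes the formally stronger statement that \emph{every} $(\e(\star_f))_f$-ideal of $R$ meeting $H$ is homogeneous, with maximal ones as a special case. The paper's proof, by contrast, genuinely needs maximality (to pass from $M\subsetneq\mathbf{C}(M)$ to $\mathbf{C}(M)^{\star_f}=R$) and tracks more closely the classical argument for maximal $t$-ideals that it aims to generalize. Your proof also sidesteps the content computation $\mathbf{C}\bigl(z(a,f_1,\ldots,f_n)\bigr)^{\star_f}=\mathbf{C}(z)^{\star_f}$, which in the paper is the most delicate step.
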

\begin{proof}
Let $M$ be a maximal $(\e(\star_f))_f$-ideal of $R$ intersecting $H$. If $M\subsetneq\mathbf{C}(M)$, then we have
$\mathbf{C}(M)^{\star_f}=R$; so there are $f_1,\ldots,f_n\in M$ such that $\mathbf{C}(f_1,\ldots,f_n)^{\star_f}=R$. Let $a\in M\cap H$. Then for each $z\in(R:(a,f_1,\ldots,f_n))\setminus(0)\subseteq R_H$, we obtain that  $\mathbf{C}(z(a,f_1,\ldots,f_n))^{\star_f}=(\mathbf{C}(za)+\mathbf{C}(z(f_1,\ldots,f_n)))^{\star_f}
=((\mathbf{C}(za))^{\star_f}+\mathbf{C}(z(f_1,\ldots,f_n))^{\star_f})^{\star_f}=(a\mathbf{C}(z)^{\star_f}+\mathbf{C}(z)^{\star_f})^{\star_f}=
\mathbf{C}(z)^{\star_f}$. Hence
\begin{align*}
M\supseteq &(a,f_1,\ldots,f_n)^{\e({\star_f})} \\[1ex]
        =&\bigcap\{z^{-1}(\mathbf{C}(z(a,f_1,\cdots,f_n)))^{\star_f}\mid 0\neq z\in (R:(a,f_1,\ldots,f_n))\} \\[1ex]
        = & \bigcap\{z^{-1}(\mathbf{C}(z))^{\star_f}\mid 0\neq z\in (R:(a,f_1,\ldots,f_n))\} \\[1ex]
        \supseteq & \bigcap\{z^{-1}(zR)^{\e(\star_f)}\mid 0\neq z\in (R:(a,f_1,\ldots,f_n))\}\\[1ex]
        =&R^{\e(\star_f)}=R.
\end{align*}
This is a contradiction. Therefore, $M=\mathbf{C}(M)$.
\end{proof}

We state Proposition \ref{max} and the following corollaries for homogeneous star operations and it is clear that these are true for homogeneous preserving star operations too.

\begin{corollary}\label{}
Let $R =\bigoplus_{\alpha \in \Gamma}R_{\alpha}$ be a graded integral domain and $\star:\mathbf{HF}(R)\to\mathbf{HF}(R)$ be a homogeneous star operation on $R$. Then
$$h\text{-}\Max^{\star_f}(R)=\{M\in \Max^{(\e(\star_f))_f}(R)\mid M\cap H\neq\emptyset\}.$$
\end{corollary}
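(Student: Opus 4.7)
My plan is to prove the two inclusions separately, using Proposition \ref{max} together with the fact, established in Theorem \ref{2.4}(1), that $(\e(\star_f))_f$ restricts to $\star_f$ on $\mathbf{HF}(R)$; that is, $(\e(\star_f))_f|_{\mathbf{HF}(R)}=\star_f$.

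For the inclusion $\supseteq$, I will start with a maximal $(\e(\star_f))_f$-ideal $M$ satisfying $M\cap H\ne\emptyset$. Proposition \ref{max} immediately gives that $M$ is homogeneous, whence the restriction identity yields $M^{\star_f}=M^{(\e(\star_f))_f}=M$, so $M$ is a proper homogeneous $\star_f$-ideal. To upgrade this to maximality within that class, I will suppose $N$ is a proper homogeneous $\star_f$-ideal with $M\subseteq N$; then $N^{(\e(\star_f))_f}=N^{\star_f}=N$, so $N$ is a proper $(\e(\star_f))_f$-ideal, and the maximality of $M$ forces $N=M$.

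For the reverse inclusion $\subseteq$, I will take a homogeneous maximal $\star_f$-ideal $N$. First, the homogeneous decomposition $N=\bigoplus_{\alpha\in\Gamma}(N\cap R_{\alpha})$ supplies a nonzero homogeneous element, so $N\cap H\ne\emptyset$, and the restriction identity yields $N^{(\e(\star_f))_f}=N$. To promote $N$ to a maximal $(\e(\star_f))_f$-ideal, the plan is to choose by Zorn's lemma a maximal $(\e(\star_f))_f$-ideal $M'$ with $N\subseteq M'$, observe that $M'\cap H\supseteq N\cap H\ne\emptyset$, apply Proposition \ref{max} to conclude $M'$ is homogeneous, and then use the maximality of $N$ among proper homogeneous $\star_f$-ideals to deduce $M'=N$.

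The main obstacle I anticipate is precisely this last step: a priori, maximality of $N$ in the homogeneous class need not transfer to maximality in the larger $(\e(\star_f))_f$-class, since a strict enlargement of $N$ might escape the homogeneous world. Proposition \ref{max} is exactly what resolves this tension by forcing any maximal $(\e(\star_f))_f$-ideal that meets $H$ to be homogeneous; thus the combination of the restriction identity of Theorem \ref{2.4}(1) and Proposition \ref{max} does all the work.
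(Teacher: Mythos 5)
Your proof is correct and supplies exactly the details the paper leaves implicit: the corollary follows from Proposition~\ref{max} (maximal $(\e(\star_f))_f$-ideals meeting $H$ are homogeneous) together with the restriction identity $(\e(\star_f))_f|_{\mathbf{HF}(R)}=\star_f$ from Theorem~\ref{2.4}(1), and you have organized the two inclusions cleanly, including the Zorn's lemma step (valid since $(\e(\star_f))_f$ is of finite type) and the correct observation that $M'\supseteq N$ forces $M'\cap H\supseteq N\cap H\neq\emptyset$.
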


\begin{corollary}\label{}
Let $R =\bigoplus_{\alpha \in \Gamma}R_{\alpha}$ be a graded integral domain. Then:
\begin{enumerate}
  \item $h\text{-}\Max^{t}(R)=\{M\in \Max^{t}(R)\mid M\cap H\neq\emptyset\}.$
  \item $h\text{-}\Max(R):=h\text{-}\Max^{d_R}(R)=\{M\in \Max^{(\e(d_R))_f}(R)\mid M\cap H\neq\emptyset\}.$
\end{enumerate}
\end{corollary}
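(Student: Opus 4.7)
The plan is to deduce both statements as direct specializations of the preceding corollary, which asserts that for any homogeneous star operation $\star$ on $R$,
$$h\text{-}\Max^{\star_f}(R)=\{M\in \Max^{(\e(\star_f))_f}(R)\mid M\cap H\neq\emptyset\}.$$
Both parts will then follow by choosing $\star$ appropriately and reading off the specialization.

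For part (1), I apply this with $\star := v|_{\mathbf{HF}(R)}$, the homogeneous star operation obtained by restricting the classical $v$-operation (which is homogeneous preserving, as recorded in the introduction). Then $\star_f = t|_{\mathbf{HF}(R)}$, so the left-hand side becomes $h\text{-}\Max^{t}(R)$. By Corollary \ref{2.3}(2) we have $(\e(\star_f))_f = (\e(t))_f = t$, so the right-hand side becomes $\{M\in \Max^{t}(R)\mid M\cap H\neq\emptyset\}$, which is exactly the content of (1).

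For part (2), I apply the preceding corollary with $\star = d_R$, the identity homogeneous star operation, which is trivially of finite type; hence $\star_f = d_R$. The corollary then reads
$$h\text{-}\Max^{d_R}(R)=\{M\in \Max^{(\e(d_R))_f}(R)\mid M\cap H\neq\emptyset\},$$
which by the definition $h\text{-}\Max(R) := h\text{-}\Max^{d_R}(R)$ is precisely the assertion of (2). There is no genuine obstacle here: both parts are immediate transcriptions of the preceding corollary, the only ingredients beyond it being the identity $(\e(t))_f = t$ from Corollary \ref{2.3}(2) used for part (1) and the observation that the identity operation coincides with its own finite-type associate for part (2).
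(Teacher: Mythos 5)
Your proof is correct and is plainly the argument the paper intends (the paper gives no proof, as this corollary is an immediate specialization of the one preceding it, together with Corollary \ref{2.3}(2) for part (1)). The only minor simplification available is to take $\star = t|_{\mathbf{HF}(R)}$ directly in part (1) rather than $v|_{\mathbf{HF}(R)}$, which avoids the small intermediate step of identifying $(v|_{\mathbf{HF}(R)})_f$ with $t|_{\mathbf{HF}(R)}$; but your route is equally valid.
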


It is well-known that $\Max^{\star_f}(R)=\Max^{\widetilde{\star}}(R)$ for each (classical) star operation $\star:\mathbf{F}(R)\to\mathbf{F}(R)$ of $R$, see \cite[Theorem 2.16]{ac00}. Then we reobtain the following equality, see \cite[Proposition 2.5]{s14} and \cite[Corollry 2.11]{k24}.

\begin{corollary}\label{}
Let $R =\bigoplus_{\alpha \in \Gamma}R_{\alpha}$ be a graded integral domain and $\star:\mathbf{HF}(R)\to\mathbf{HF}(R)$ be a homogeneous star operation on $R$. Then
$$h\text{-}\Max^{\star_f}(R)=h\text{-}\Max^{\widetilde{\star}}(R).$$
\end{corollary}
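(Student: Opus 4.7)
The plan is to reduce this homogeneous statement to the known classical equality $\Max^{\star_f}(R)=\Max^{\widetilde{\star}}(R)$ applied to appropriate extensions on $\mathbf{F}(R)$, and then translate back via the previous corollary. Concretely, the idea is to describe both sides using the previous corollary, then use Theorem~\ref{2.4} to identify the two classical finite-type star operations whose maximal ideals are relevant.

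First I would apply the previous corollary directly to $\star$ to obtain
\[
h\text{-}\Max^{\star_f}(R)=\{M\in \Max^{(\e(\star_f))_f}(R)\mid M\cap H\neq\emptyset\},
\]
and, noting that $\widetilde{\star}$ is itself a homogeneous star operation of finite type so that $(\widetilde{\star})_f=\widetilde{\star}$, apply the same corollary to $\widetilde{\star}$ to obtain
\[
h\text{-}\Max^{\widetilde{\star}}(R)=\{M\in \Max^{(\e(\widetilde{\star}))_f}(R)\mid M\cap H\neq\emptyset\}.
\]
It therefore suffices to prove the equality of the classical sets
\[
\Max^{(\e(\star_f))_f}(R)=\Max^{(\e(\widetilde{\star}))_f}(R).
\]

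Next I would invoke Theorem~\ref{2.4}(1) to rewrite $(\e(\star_f))_f=(\e(\star))_f$, and then use the classical fact $\Max^{(\e(\star))_f}(R)=\Max^{\widetilde{\e(\star)}}(R)$ cited just before the corollary. By Theorem~\ref{2.4}(2) we have $\widetilde{\e(\star)}=\widetilde{\e(\widetilde{\star})}$, so
\[
\Max^{(\e(\star))_f}(R)=\Max^{\widetilde{\e(\widetilde{\star})}}(R).
\]
On the other side, applying the same classical equality to the star operation $\e(\widetilde{\star})$ yields $\Max^{(\e(\widetilde{\star}))_f}(R)=\Max^{\widetilde{\e(\widetilde{\star})}}(R)$. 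Combining these two identities gives the desired equality of maximal ideal sets, and intersecting with the condition $M\cap H\neq\emptyset$ finishes the argument.

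There is no real obstacle beyond careful bookkeeping; the main point is making sure that the two different chains $\star \rightsquigarrow \e(\star) \rightsquigarrow \widetilde{\e(\star)}$ and $\widetilde{\star} \rightsquigarrow \e(\widetilde{\star}) \rightsquigarrow \widetilde{\e(\widetilde{\star})}$ land at the same classical stable finite-type star operation, which is exactly the content of Theorem~\ref{2.4}(2). The only subtlety to verify is that the hypothesis of the preceding corollary (namely that the input is a homogeneous star operation) is indeed satisfied by $\widetilde{\star}$, which follows from the construction of $\widetilde{\star}$ recalled just before Theorem~\ref{2.4}.
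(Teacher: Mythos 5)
Your proof is correct and spells out precisely the derivation that the paper leaves implicit (the paper simply remarks that the equality follows from the classical identity $\Max^{\sigma_f}(R)=\Max^{\widetilde{\sigma}}(R)$ together with the preceding corollary). The chain
\[
\Max^{(\e(\star_f))_f}(R)=\Max^{(\e(\star))_f}(R)=\Max^{\widetilde{\e(\star)}}(R)
=\Max^{\widetilde{\e(\widetilde{\star})}}(R)=\Max^{(\e(\widetilde{\star}))_f}(R)
\]
is exactly the right bookkeeping, and Theorem~\ref{2.4}(1) and (2) provide the two nontrivial identifications. The only point you mention but leave unargued is that $(\widetilde{\star})_f=\widetilde{\star}$ for the \emph{homogeneous} $\widetilde{\star}$; this does hold (it is visible directly from the defining formula $A^{\widetilde{\star}}=\{x\in R_H\mid xJ\subseteq A$ for some f.g.\ homogeneous $J$ with $J^{\star}=R\}$, or alternatively from the fact that $\widetilde{\e(\star)}$ is a finite type classical extension of $\widetilde{\star}$, and a homogeneous restriction of a finite type star operation is of finite type by the same $\mathbf{C}(\cdot)$ argument used in the proof of Theorem~\ref{2.4}(1)), and it is worth a sentence since without it the preceding corollary gives $\Max^{(\e((\widetilde{\star})_f))_f}(R)$ rather than $\Max^{(\e(\widetilde{\star}))_f}(R)$. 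With that sentence added, the argument is complete and matches the paper's intended route.
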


\section{Topological considerations}

Let $D$ be an integral domain with quotient
field $K$. Let $\overline{\mathbf{F}}(D)$ denotes the set of all nonzero $D$-submodules of $K$. Obviously,
$\mathbf{F}(D)\subseteq\overline{\mathbf{F}}(D)$. As in \cite{om94}, a {\it semistar operation on} $D$ is a map
$\star:\overline{\mathbf{F}}(D)\rightarrow\overline{\mathbf{F}}(D)$, $E\mapsto E^{\star}$, such that, for all $x\in K$, $x\neq 0$, and
for all $E, F\in\overline{\mathbf{F}}(D)$, the properties (1) $(xE)^{\star}=xE^{\star}$; (2) $E\subseteq F$ implies that $E^{\star}\subseteq F^{\star}$; (3) $E\subseteq E^{\star}$ and $E^{\star\star}:=(E^{\star})^{\star}=E^{\star}$ hold. If $\star:\overline{\mathbf{F}}(D)\rightarrow\overline{\mathbf{F}}(D)$ is a semistar operation on $D$ such that $D^{\star}=D$, then $\star|_{\mathbf{F}(D)}:\mathbf{F}(D)\rightarrow\mathbf{F}(D)$ is a (classical) star operation. On the other hand if, $\star:\mathbf{F}(D)\rightarrow\mathbf{F}(D)$ is a (classical) star operation, then $\star_e:\overline{\mathbf{F}}(D)\rightarrow\overline{\mathbf{F}}(D)$ defined by
$$F^{\star_e}=\left\{
  \begin{array}{ll}
    F^{\star} & \hbox{$F\in\mathbf{F}(D);$} \\
    K, & \hbox{$F\in\overline{\mathbf{F}}(D)\setminus\mathbf{F}(D),$}
  \end{array}
\right.$$
for each $F\in\overline{\mathbf{F}}(D)$, is a semistar operation such that $\star_e|_{\mathbf{F}(D)}=\star$, see \cite[Remark 1.5]{fh00}.

In \cite{fs14}, the set $\SStar(D)$ of all semistar operation on $D$, was endowed with a topology (called the \emph{Zariski topology}) having, as a subbasis of open sets, the sets of the type $V_E:=\{\star\in\SStar(D)\mid1\in E^{\star}\}$, where $E$ is a nonzero $D$-submodule of $K$. This topology makes $\SStar(D)$ into a quasi-compact $T_0$ space.

On the set $\Star(D)$ of all (classical) star operations on $D$, we can introduce a natural topology that we still call the \emph{Zariski topology}, whose subbasic  open sets are $$U_F:=\{\star\in\Star(D)\mid 1\in F^{\star}\},$$ as $F$ varies among the elements of $\mathbf{F}(D)$.

\begin{remark}\label{}
{\em
Endow $\Star(D)$ and $\SStar(D)$ with their Zariski topologies. Then the map $$e:\Star(D)\to\SStar(D), \star\mapsto\star_e,$$ is a topological embedding. Indeed, if $V_E$ is a subbasic open set of $\SStar(D)$, then $e^{-1}(V_E)=\{\star\in\Star(D)\mid1\in E^{\star_e}\}$. So that $e^{-1}(V_E)=U_E$ if $E\in\mathbf{F}(D)$, and $e^{-1}(V_E)=\Star(D)$ if $E\in\overline{\mathbf{F}}(D)\setminus\mathbf{F}(D)$. Thus $e$ is continuous. On the other hand $e(U_F)=V_F\cap e(\Star(D))$ for each $F\in\mathbf{F}(D)$. Therefore $e$ is a topological embedding.
}
\end{remark}

Let $R =\bigoplus_{\alpha \in \Gamma}R_{\alpha}$ be a graded integral domain. Consider the set $\Star_{hp}(R)$ of all homogeneous preserving star operations as a topological space endowed with the subspace topology of the Zariski topology of $\Star(R)$.

On the set $\HStar(R)$ of all homogeneous star operations on $R$, we introduce a natural topology that we still call the \emph{Zariski topology}, whose subbasic  open sets are $$W_A:=\{\star\in\HStar(R)\mid 1\in A^{\star}\},$$ as $A$ varies among the elements of $\mathbf{HF}(R)$. Consider the set $\HStar_f(R)$ of all finite type homogeneous star operations as a topological space endowed with the subspace topology of the Zariski topology of $\HStar(R)$.

\begin{remark}\label{f}
{\em Let $R=\bigoplus_{\alpha\in\Gamma}R_{\alpha}$ be a graded integral domain.
The Zariski topology on $\HStar_{f}(R)$ is determined by the finitely generated homogeneous fractional ideals of $R$, in the sense that the collection of the sets of the form $W'_A:=W_A\cap\HStar_{f}(R)$, where $A$ varies among the finitely generated homogenous fractional ideals of $R$, is a subbasis, since there is an equality $$W_F\cap\HStar_{f}(R)=\bigcup\{W'_A\mid A\in f_h(R), A\subseteq F\},$$ where $f_h(R)$ is the set of finitely generated homogenous fractional ideals.
}
\end{remark}

In the following proposition, the closure of a subset $Y$ of a topological space is denoted by $\Ad(Y)$.

\begin{proposition}\label{T0}
Let $R =\bigoplus_{\alpha \in \Gamma}R_{\alpha}$ be a graded integral domain. Endow $\HStar(R)$ and $\Star_{hp}(R)$ with their Zariski topologies.
\begin{enumerate}
  \item For any $*\in\HStar(R)$, we have $\Ad(\{*\})=\{*'\in\HStar(R)\mid*'\leq*\}$.
  \item The topological space $\HStar(R)$ is $T_0$.
  \item The canonical map $\varphi:\Star_{hp}(R)\to\HStar(R)$, $\star\mapsto \star|_{\mathbf{HF}(R)}$, is a continuous surjection.
  \item $\varphi\circ\e$ is the identity map of $\HStar(R)$, that is, $\varphi$ is a topological retraction.
\end{enumerate}
\end{proposition}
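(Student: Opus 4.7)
The plan is to handle the four parts in order: (1) carries the content, (2) is a direct consequence, and (3) and (4) follow from a single application of Theorem \ref{e(*)}.

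For (1), I translate the closure condition through the subbasis: $*'\in\Ad(\{*\})$ if and only if every $W_A$ containing $*'$ contains $*$, i.e.\ $1\in A^{*'}\Rightarrow 1\in A^*$ for all $A\in\mathbf{HF}(R)$. The inclusion $\{*'\mid *'\leq *\}\subseteq\Ad(\{*\})$ is immediate from this reformulation. For the reverse inclusion, fix $A\in\mathbf{HF}(R)$ and a nonzero homogeneous $x\in A^{*'}$ (such exist because $A^{*'}\in\mathbf{HF}(R)$ is generated, as an $R$-submodule of $R_H$, by its homogeneous elements); since $x\in R_H\setminus\{0\}$ is homogeneous we have $x^{-1}A\in\mathbf{HF}(R)$, and axiom $\star_1$ gives $1\in x^{-1}A^{*'}=(x^{-1}A)^{*'}$. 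The hypothesis then yields $1\in(x^{-1}A)^*=x^{-1}A^*$, i.e.\ $x\in A^*$. Summing over homogeneous generators gives $A^{*'}\subseteq A^*$, hence $*'\leq *$.

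Part (2) follows: for $*\neq *'$, at least one of $*\leq *'$ and $*'\leq *$ fails, and (1) produces a subbasic open separating them. For (3), continuity is formal, since $\varphi^{-1}(W_A)=U_A\cap\Star_{hp}(R)$ for each $A\in\mathbf{HF}(R)$ straight from the definition of $\varphi$. For surjectivity, given $*\in\HStar(R)$, Theorem \ref{e(*)} furnishes a classical star operation $\e(*)$ with $A^{\e(*)}=A^*\in\mathbf{HF}(R)$ for every $A\in\mathbf{HF}(R)$, so $\e(*)\in\Star_{hp}(R)$, and $\varphi(\e(*))=\e(*)|_{\mathbf{HF}(R)}=*$. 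Part (4) is exactly this last equality, which exhibits $\e$ as a section of the continuous surjection $\varphi$, i.e.\ $\varphi$ as a topological retraction.

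The only substantive point is the reverse inclusion in (1): one must upgrade the unit-membership condition $1\in A^{*'}\Rightarrow 1\in A^*$ into the pointwise containment $A^{*'}\subseteq A^*$. This is achieved by rescaling against a homogeneous generator of $A^{*'}$ to apply axiom $\star_1$, which requires precisely that the scalar be homogeneous --- the structural hypothesis that makes the translation between homogeneous and classical star operations work throughout the paper.
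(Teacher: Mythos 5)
Your proof is correct and follows essentially the same route as the paper's: the substantive step in (1) is the rescaling of $A$ by a homogeneous element of $A^{*'}$ to convert the unit-membership condition into the pointwise containment $A^{*'}\subseteq A^*$, and the paper does exactly this (phrased contrapositively, picking a homogeneous $a\in A^{*'}\setminus A^*$ and using $W_{a^{-1}A}$). Parts (2)--(4) match the paper's argument as well, with (3) and (4) resting on the same two facts: $\varphi^{-1}(W_A)=U_A\cap\Star_{hp}(R)$ and Theorem~\ref{e(*)} providing a section $\e$.
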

\begin{proof}
(1) For $``\supseteq"$ assume $*'\in\HStar(R)$ such that $*'\leq*$ and that $A\in\mathbf{HF}(R)$ is so that $*'\in W_A$. Then $1\in A^{*'}\subseteq A^*$, and hence $*\in W_A$. This means that $*'\in\Ad(\{*\})$. For $``\subseteq"$, if $*'\nleq*$, then there is $A\in\mathbf{HF}(R)$ such that $A^{*'}\nsubseteq A^{*}$; hence, if $a\in A^{*'}\setminus A^{*}$ is a homogeneous element, then $*\notin W_{a^{-1}A}$ while $*'\in W_{a^{-1}A}$. Thus $\HStar(R)\setminus W_{a^{-1}A}$ is a closed set containing $*$ but not $*'$, and $*'\notin\Ad(\{*\})$.

(2) By (1), for any homogeneous star operations $*,*'$ on $R$, $\Ad(\{*\})=\Ad(\{*'\})$ if and only if $*=*'$, i.e., $\HStar(R)$ satisfies the $T_0$ axiom.

It follows from Theorem \ref{e(*)} that $\varphi$ is surjective. Since $$\varphi^{-1}(W_A)=U_A\cap\Star_{hp}(R),$$ for each $A\in\mathbf{HF}(R)$, $\varphi$ is continuous, to complete the proof of (3) and (4).
\end{proof}

In the following proposition we show that $\HStar(R)$ can be identifiable canonically with a subspace of $\Star_{hp}(R)$ endowed with a weaker topology generated by the family $\{U_F\cap\Star_{hp}(R)\mid F\in\mathbf{F}(R), F\subseteq R_H\}$. 

\begin{proposition}\label{top}
Let $R =\bigoplus_{\alpha \in \Gamma}R_{\alpha}$ be a graded
integral domain. Endow $\HStar(R)$  with its Zariski topology. If $\Star_{hp}(R)$ is endowed with the topology generated by the family $\{U_F\cap\Star_{hp}(R)\mid F\in\mathbf{F}(R), F\subseteq R_H\}$, then the map $$\e:\HStar(R)\to\Star_{hp}(R), \star\mapsto\e(\star),$$ is a topological embedding.
\end{proposition}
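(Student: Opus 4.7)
I would organize the proof around three properties of $\e$: injectivity, openness onto the image, and continuity with respect to the prescribed topology on $\Star_{hp}(R)$.

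Injectivity is immediate from Theorem \ref{e(*)}(2): since $\e(\star)|_{\mathbf{HF}(R)} = \star$, distinct homogeneous star operations produce distinct classical extensions. For openness onto the image, I would show that $\e(W_A) = U_A \cap \e(\HStar(R))$ for each $A \in \mathbf{HF}(R)$. The chain of equivalences
\[\star \in W_A \iff 1 \in A^\star \iff 1 \in A^{\e(\star)} \iff \e(\star) \in U_A,\]
which uses $A^{\e(\star)} = A^\star$ from Theorem \ref{e(*)}(2) (applicable because $A$ is homogeneous), gives the identity. Because $A \in \mathbf{HF}(R)$ ensures $A \subseteq R_H$, the set $U_A \cap \Star_{hp}(R)$ belongs to the prescribed subbasis of $\Star_{hp}(R)$, so $\e(W_A)$ is open in the subspace topology on the image.

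The main obstacle is continuity: for each $F \in \mathbf{F}(R)$ with $F \subseteq R_H$, I must show $\e^{-1}(U_F \cap \Star_{hp}(R)) = \{\star \in \HStar(R) : 1 \in F^{\e(\star)}\}$ is open in $\HStar(R)$. Unpacking $F^{\e(\star)} = \bigcap_{z \in (R:F) \setminus \{0\}} z^{-1}(\mathbf{C}(zF))^\star$, the condition $1 \in F^{\e(\star)}$ is equivalent to $z \in (\mathbf{C}(zF))^\star$ for every nonzero $z \in (R:F)$, and since $(\mathbf{C}(zF))^\star$ is a homogeneous fractional ideal of $R$, this further reduces to $\mathbf{C}(z) \subseteq (\mathbf{C}(zF))^\star$. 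For a single $z$ with homogeneous decomposition $z = z_{\alpha_1} + \cdots + z_{\alpha_n}$, this is the finite intersection $\bigcap_i W_{z_{\alpha_i}^{-1} \mathbf{C}(zF)}$ of subbasic opens, hence open.

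To collapse the a priori infinite intersection over $z$ to a finite one, I would invoke the Claim in the proof of Theorem \ref{e(*)}: choosing $z_0 \in (R:F) \cap R \setminus \{0\}$ with $z_0 F \cap H \neq \emptyset$ yields $(z_0 F)^{\e(\star)} = (\mathbf{C}(z_0 F))^\star$, and hence $F^{\e(\star)} = z_0^{-1}(\mathbf{C}(z_0 F))^\star$ by the classical star-operation axiom. The condition $1 \in F^{\e(\star)}$ then reduces to the single statement $\mathbf{C}(z_0) \subseteq (\mathbf{C}(z_0 F))^\star$, which is the finite intersection of subbasic opens described above, so openness of $\e^{-1}(U_F)$ follows. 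The remaining subcase, when no such $z_0$ can be chosen (e.g., when $F$ admits no nonzero homogeneous subideal), requires a separate argument: typically one exhibits a $z$ whose condition fails for every $\star$, concluding $\e^{-1}(U_F) = \emptyset$ and openness is trivial.
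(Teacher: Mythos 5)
Your handling of injectivity and of openness onto the image is correct and coincides with the paper's argument, and your main-case reduction for continuity is also sound: if there exists $0\neq z_0\in(R:F)\cap R$ with $z_0F\cap H\neq\emptyset$, then the Claim in Theorem \ref{e(*)} gives $F^{\e(\star)}=z_0^{-1}(\mathbf{C}(z_0F))^{\star}$ for \emph{every} $\star$, and $\e^{-1}(U_F\cap\Star_{hp}(R))=\bigcap_i W_{z_{0,\alpha_i}^{-1}\mathbf{C}(z_0F)}$ is a finite intersection of subbasic opens. (A small imprecision: $\mathbf{C}(z)$ only makes sense for $z\in R_H$; for $z\in(R:F)\setminus R_H$ the condition $z\in(\mathbf{C}(zF))^{\star}$ simply fails, since $(\mathbf{C}(zF))^{\star}\subseteq R$.) The genuine gap is the ``remaining subcase''. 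Your proposed dichotomy --- when no admissible $z_0$ exists, exhibit a $z$ whose condition fails for all $\star$ and conclude $\e^{-1}(U_F\cap\Star_{hp}(R))=\emptyset$ --- is false. Take $R=k[X^{\pm1},Y^{\pm1}]$ graded by $\Gamma=\mathbb{Z}^2$, so that $H$ consists of the monomials, every homogeneous element is a unit, and $R_H=R$; let $F=(1+X,1+Y)$, a proper non-principal ideal with $(R:F)=R$. If some $z\in(R:F)$ satisfied $zF\cap H\neq\emptyset$, then $zF$ would contain a unit and $F$ would be principal; so no $z_0$ exists. Nevertheless, for every $z\in(R:F)=R$ the Dedekind--Mertens formula with $g=1+X$ (note $\mathbf{C}(g)=R$) gives $\mathbf{C}(z)=\mathbf{C}(zg)\subseteq\mathbf{C}(zF)\subseteq(\mathbf{C}(zF))^{\star}$, hence $z\in(\mathbf{C}(zF))^{\star}$ and $1\in F^{\e(\star)}$ for \emph{every} homogeneous star operation $\star$. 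Thus $\e^{-1}(U_F\cap\Star_{hp}(R))=\HStar(R)\neq\emptyset$, and the argument you sketch for this case cannot be completed.

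The paper avoids any such case distinction. Assuming the preimage contains some $*$, it first deduces that every $z\in(R:F)$ lies in $R$ (because $z=z\cdot1\in zF^{\e(*)}=(zF)^{\e(*)}\subseteq R$), so every $z$ has a homogeneous decomposition; it then shows that for any $\star$ with $1\in\mathbf{C}(F)^{\star}$ one has $F^{\e(\star)}\supseteq\mathbf{C}(F)^{\star}\ni1$, so that the preimage equals the single subbasic open set $W_{\mathbf{C}(F)}$ (and it is trivially open when empty) --- exactly what is needed in the example above, where $\mathbf{C}(F)=R$ and the preimage is the whole space. Your positive case gives a valid alternative description when a suitable $z_0$ exists, but to close the gap you would essentially have to reproduce the paper's uniform argument rather than appeal to emptiness.
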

\begin{proof}
First, we show that $\e$ is continuous, and it is enough to show that, for any fractional ideal $F$ of $R$ such that $F\subseteq R_H$, the set $\e^{-1}(U_F\cap\Star_{hp}(R))$ is open in $\HStar(R)$. Note that $$\e^{-1}(U_F\cap\Star_{hp}(R))=\{*\in\HStar(R)\mid 1\in F^{\e(*)}\}.$$ Fix a homogeneous star operation $*\in\e^{-1}(U_F\cap\Star_{hp}(R))$. Let $0\neq z\in(R:F)$. Then $zF\subseteq R$ and hence $zF^{\e(*)}\subseteq R$. Since $1\in F^{\e(*)}$, we obtain that $z\in R$. Because $1\in F^{\e(*)}\subseteq \mathbf{C}(F)^{\e(*)}=\mathbf{C}(F)^{*}$ we have $*\in W_T$, where $T:=\mathbf{C}(F)$. Now we show that $W_T\subseteq \e^{-1}(U_F\cap\Star_{hp}(R))$. Assume that $\star\in W_T$. Then $1\in T^{\star}=\mathbf{C}(F)^{\star}$. Then for each $0\neq z\in (R:F)$ we have $\mathbf{C}(zF)^{\star}=(z_{\alpha_1}\mathbf{C}(F)+\cdots+z_{\alpha_n}\mathbf{C}(F))^{\star}=
(z_{\alpha_1}\mathbf{C}(F)^{\star}+\cdots+z_{\alpha_n}\mathbf{C}(F)^{\star})^{\star}$, where $z=z_{\alpha_1}+\cdots+z_{\alpha_n}$ is the decomposition of $z$ into homogeneous elements in $R$. Then
\begin{align*}
F^{\e(\star)}=& \bigcap\{z^{-1}(\mathbf{C}(zF))^{\star}\mid 0\neq z\in(R:F)\} \\[1ex]
             =& \bigcap\{z^{-1}(z_{\alpha_1}\mathbf{C}(F)^{\star}+\cdots+z_{\alpha_n}\mathbf{C}(F)^{\star})^{\star}\mid 0\neq z\in(R:F)\} \\[1ex]
             \supseteq& \bigcap\{z^{-1}(z_{\alpha_1}\mathbf{C}(F)^{\star}+\cdots+z_{\alpha_n}\mathbf{C}(F)^{\star})\mid 0\neq z\in(R:F)\} \\[1ex]
             \supseteq& \mathbf{C}(F)^{\star},
\end{align*}
and hence $1\in \mathbf{C}(F)^{\star}\subseteq F^{\e(\star)}$ and $\e(\star)\in U_F\cap\Star_{hp}(R)$. Therefore $\e^{-1}(U_F\cap\Star_{hp}(R))$ is an open set in $\HStar(R)$.

Now, we have to show that the image via $\e$ of each open set $W$ of $\HStar(R)$, is open in $\e(\HStar(R))$ (endowed with the subspace topology). Without loss of generality, we can assume that $W=W_A$ for some $A\in \mathbf{HF}(R)$. Then we have $\e(W_A)=\e(\HStar(R))\cap U_A$. This shows that $\e(W_A)$ is open in $\e(\HStar(R))$.
\end{proof}

Let $\mathcal{S}$ be a nonempty set of homogeneous star operations on $R$. For each $A\in\mathbf{HF}(R)$, define $\bigwedge(\mathcal{S})$ as follows:
$$A^{\bigwedge(\mathcal{S})}=\bigcap\{A^*\mid*\in\mathcal{S}\}.$$
It is easy to see that $\bigwedge(\mathcal{S})$ is a homogeneous star operation on $R$ and it is the infimum of $\mathcal{S}$ in the partially ordered set $(\HStar(R),\leq)$. The homogeneous star operation $\bigvee(\mathcal{S}):=\bigwedge\{\tau\mid *\leq\tau,$ for all $*\in\mathcal{S}\}$ is the supremum of $\mathcal{S}$ in $(\HStar(R),\leq)$. 

\begin{proposition}\label{2.7} Let $\mathcal{S}$ be a quasi-compact subspace of $\HStar_f(R)$. Then, the homogeneous star operation $\bigwedge(\mathcal{S})$ belongs to $\HStar_f(R)$.
\end{proposition}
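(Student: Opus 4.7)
Set $* := \bigwedge(\mathcal{S})$. The plan is to show directly that $A^* = A^{*_f}$ for every $A\in\mathbf{HF}(R)$; since the inclusion $A^{*_f}\subseteq A^*$ is automatic, the work is to establish $A^*\subseteq A^{*_f}$. Fix $A\in\mathbf{HF}(R)$ and $x\in A^*$. Because $A^*$ is itself a homogeneous fractional ideal, $A^*=\bigoplus_{\alpha}(A^*\cap (R_H)_\alpha)$, so every homogeneous component of $x$ lies in $A^*$; since $A^{*_f}$ is closed under addition, I may reduce to the case where $x\in R_H$ is homogeneous.

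With $x$ homogeneous, form $A':=x^{-1}A\in\mathbf{HF}(R)$. Then $1\in(A')^*$, meaning $1\in (A')^\tau$ for every $\tau\in\mathcal{S}$. Since $\mathcal{S}\subseteq\HStar_f(R)$, each $\tau$ is of finite type, so for every $\tau\in\mathcal{S}$ there exists $B_\tau\in f_h(R)$ with $B_\tau\subseteq A'$ and $1\in B_\tau^\tau$, i.e.\ $\tau\in W'_{B_\tau}$ in the notation of Remark \ref{f}. Therefore the family
\[
\{W'_B\cap\mathcal{S}\mid B\in f_h(R),\ B\subseteq A'\}
\]
is an open cover of $\mathcal{S}$ (it consists of subbasic open sets in the subspace topology on $\HStar_f(R)$ by Remark \ref{f}). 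Here lies the only real step: \emph{apply the hypothesis that $\mathcal{S}$ is quasi-compact}.

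Quasi-compactness gives finitely many $B_1,\dots,B_n\in f_h(R)$ with $B_i\subseteq A'$ such that $\mathcal{S}\subseteq\bigcup_{i=1}^n W'_{B_i}$. Let $B:=B_1+\cdots+B_n$, a finitely generated homogeneous fractional ideal with $B\subseteq A'$. For any $\tau\in\mathcal{S}$, $\tau\in W'_{B_i}$ for some $i$, so $1\in B_i^\tau\subseteq B^\tau$; hence $1\in\bigcap_{\tau\in\mathcal{S}}B^\tau=B^*$. Now $xB\subseteq xA'=A$, and $xB$ is a finitely generated homogeneous fractional ideal (since $x$ is homogeneous), and $x=x\cdot 1\in xB^*=(xB)^*$. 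By definition of $*_f$, this puts $x\in A^{*_f}$, completing the required inclusion.

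The main obstacle, and the only nontrivial point, is recognizing that after reducing to homogeneous $x$ and passing to $A'=x^{-1}A$, the condition ``$\tau$ is of finite type and $1\in(A')^\tau$'' translates precisely into membership in a subbasic open set of $\HStar_f(R)$ indexed by finitely generated homogeneous subideals of $A'$; this is what enables quasi-compactness to produce a single $B\in f_h(R)$ working uniformly across $\mathcal{S}$. The reduction to homogeneous $x$ (using that $A^*$ and $A^{*_f}$ are homogeneous) is what permits the multiplication by $x^{-1}$ within $\mathbf{HF}(R)$.
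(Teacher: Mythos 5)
Your proof is correct and follows essentially the same strategy as the paper: reduce to a homogeneous element, use the finite-type property of each $\tau\in\mathcal{S}$ to produce finitely generated homogeneous subideals, realize these as membership in subbasic open sets $W'_{a^{-1}B}$ to build an open cover, invoke quasi-compactness for a finite subcover, and sum the corresponding ideals. The only cosmetic difference is that you normalize to $A'=x^{-1}A$ up front (and make the reduction to homogeneous $x$ explicit), whereas the paper performs the $a^{-1}$-scaling inside the cover construction; the two presentations are mathematically identical.
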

\begin{proof}
Set $\mathcal{S}=\{*_i\mid i\in I\}$, $*:=\bigwedge(\mathcal{S})$, fix $A\in\mathbf{HF}(R)$ and let $a\in A^*$ be homogeneous. Since $A^*=\bigcap_{i\in I}A^{*_i}$, and each $*_i$ is of finite type, there are finitely generated homogeneous ideals $B_i\subseteq A$ such that $a\in B_i^{*_i}$; thus, for any $i$, $1\in a^{-1}B_i^{*_i}=(a^{-1}B_i)^{*_i}$ and $*_i\in W_{a^{-1}B_i}=:\Omega_i$. Therefore, $\{\Omega_i\mid i\in I\}$ is an open cover of $\mathcal{S}$. By compactness there is a finite subcover $\{\Omega_{i_1},\ldots,\Omega_{i_n}\}$. Set $B:=B_{i_1}+\cdots+B_{i_n}\subseteq A$; we claim that $a\in B^*$, and this implies that $*$ is of finite type.

For each $i\in I$, there is a $\Omega_{i_j}$ such that $*_i\in\Omega_{i_j}$. Hence $*_i\in W_{a^{-1}B_{i_j}}$ and $1\in(a^{-1}B_{i_j})^{*_i}$, that is $a\in B_{i_j}^{*_i}\subseteq B^{*_i}$. Therefore, $a\in\bigcap_{i\in I}B^{*_i}=B^*$.
\end{proof}

\begin{proposition}\label{2.11} Let $\{W_{A_i}\mid i\in I\}$ be a nonempty family of subbasic open sets of the Zariski topology of $\HStar(R)$. The following statements hold.
\begin{itemize}
  \item[(1)] $\bigcap\{W_{A_i}\mid i\in I\}$ is a complete lattice (as a subset of the partially ordered set $(\HStar(R),\leq)$.
  \item[(2)] $\bigcap\{W_{A_i}\mid i\in I\}$ is a quasi-compact subspace of $\HStar(R)$. In particular, $W_A$ is quasi-compact for any $A\in \mathbf{HF}(R)$.
\end{itemize}
\end{proposition}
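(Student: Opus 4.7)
My plan is to derive (2) from (1) via Alexander's subbase theorem, so I would begin with (1).

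For (1), set $Y := \bigcap_{i \in I} W_{A_i}$ and take a nonempty subset $\mathcal{S} \subseteq Y$. I would check that both $\bigwedge(\mathcal{S})$ and $\bigvee(\mathcal{S})$ lie in $Y$. For each $i \in I$, the identity $A_i^{\bigwedge(\mathcal{S})} = \bigcap_{* \in \mathcal{S}} A_i^*$ together with $1 \in A_i^*$ for every $* \in \mathcal{S}$ gives $1 \in A_i^{\bigwedge(\mathcal{S})}$, so $\bigwedge(\mathcal{S}) \in W_{A_i}$. For the supremum, every $* \in \mathcal{S}$ satisfies $* \leq \bigvee(\mathcal{S})$, so $1 \in A_i^* \subseteq A_i^{\bigvee(\mathcal{S})}$. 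I would also verify that $\bigvee(\mathcal{S})$ really is a homogeneous star operation: the family $\{\tau \in \HStar(R) \mid * \leq \tau \text{ for all } * \in \mathcal{S}\}$ is nonempty because $v|_{\mathbf{HF}(R)}$ is a universal upper bound (the restriction of the classical $v$-operation works via Theorem \ref{e(*)}), so its infimum is well defined. Assuming $Y$ is nonempty, $v|_{\mathbf{HF}(R)}$ itself belongs to $Y$ (supplying a top element) and $\bigwedge Y$ supplies a bottom, making $Y$ a complete lattice.

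For (2), I would invoke Alexander's subbase theorem applied to the subbasis $\{W_B \cap Y \mid B \in \mathbf{HF}(R)\}$ of the subspace $Y$: it suffices to prove that every subbasic open cover has a finite subcover. Suppose $\{W_{B_j} \cap Y \mid j \in J\}$ is a subbasic cover of $Y$ admitting no finite subcover; then for each finite $J_0 \subseteq J$ the set $Y \setminus \bigcup_{j \in J_0} W_{B_j}$ is nonempty, and I would pick $*_{J_0}$ inside it, so that $1 \notin B_j^{*_{J_0}}$ for every $j \in J_0$. Set $*_\infty := \bigwedge\{*_{J_0} \mid J_0 \subseteq J \text{ finite}\}$. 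Part (1) ensures $*_\infty \in Y$. For each $j \in J$, specializing to $J_0 = \{j\}$ gives $*_\infty \leq *_{\{j\}}$, hence $B_j^{*_\infty} \subseteq B_j^{*_{\{j\}}}$, which combined with $1 \notin B_j^{*_{\{j\}}}$ forces $1 \notin B_j^{*_\infty}$. Therefore $*_\infty \in Y$ escapes every $W_{B_j}$, contradicting the covering assumption. The ``in particular'' clause then follows by taking $I = \{*\}$ with $A_* = A$, so that $Y = W_A$.

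The main obstacle, and the only place where the proof is more than bookkeeping, is the construction in (2) of the single witness $*_\infty$ violating the hypothetical cover: it relies crucially on the closure of $Y$ under arbitrary infima supplied by (1), without which the natural infimum could escape $Y$. Everything else is routine verification that containments of the form $1 \in A^*$ are preserved under the lattice operations on $\HStar(R)$, plus the standard reduction of quasi-compactness to subbasic covers via Alexander's lemma.
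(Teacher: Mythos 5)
Your part (1) matches the paper's argument essentially verbatim: show that $\bigwedge(\mathcal{S})$ and $\bigvee(\mathcal{S})$ remain in $Y=\bigcap_i W_{A_i}$ by checking that the condition $1\in A_i^{(-)}$ is preserved under both lattice operations.

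For part (2), however, you take a genuinely heavier route than the paper. You invoke Alexander's subbase theorem, assume a subbasic cover $\{W_{B_j}\}$ with no finite subcover, collect an escaping operation $*_{J_0}$ for each finite $J_0\subseteq J$, and then use closure of $Y$ under infima to manufacture a single operation $*_\infty$ escaping every $W_{B_j}$. This is correct (in fact you only need $*_\infty:=\bigwedge\{*_{\{j\}}\mid j\in J\}$, so you could drop all the non-singleton $J_0$). The paper's argument is shorter and avoids Alexander's lemma entirely: by part (1) applied to $\Delta=W$, the bottom element $\mathfrak{i}=\bigwedge(W)$ already lies in $W$; since every subbasic open $W_B$ is an up-set for $\leq$ (if $*\in W_B$ and $*'\geq *$ then $1\in B^*\subseteq B^{*'}$), arbitrary open sets are up-sets, so \emph{any} open set containing the minimum $\mathfrak{i}$ automatically contains the whole of $W$. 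Hence every open cover admits a one-element subcover. Your approach proves the same thing by going through a general compactness criterion, where the paper simply observes that a poset whose topology consists of up-sets and which has a least element is trivially quasi-compact — the lattice structure from (1) does all the work. Both are valid; the paper's is the leaner use of the ingredients you already established.
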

\begin{proof}
Set $W:=\bigcap_{i\in I}W_{A_i}$ and let $\Delta$ be a nonempty subset of $W$. Then $\mathfrak{s}:=\bigvee(\Delta)$ and $\mathfrak{i}:=\bigwedge(\Delta)$ are, respectively, the supremum and the infimum of $\Delta$ in $\HStar(R)$. Thus it suffices to show that $\mathfrak{s},\mathfrak{i}\in W$. Clearly, $\mathfrak{s}\in W$, because $\mathfrak{s}\geq*$, for any $*\in\Delta$, and thus $\mathfrak{s}$ belongs to each open set containing some $*$. Furthermore, for any $i\in I$ we have $$1\in\bigcap_{*\in W_{A_i}}A_i^{*}\subseteq\bigcap_{*\in W}A_i^{*}=:A_i^{\mathfrak{i}},$$ and thus $\mathfrak{i}\in W$. Hence (1) is proved.

Now let $\mathcal{U}$ be an open cover of $W$. By (1), $\mathfrak{i}\in W$, and thus there is an open set $U_0\in \mathcal{U}$ such that $\mathfrak{i}\in U_0$. Then, $U_0$ must contain the whole $W$. Statement (2) is now clear.
\end{proof}

\begin{lemma}\label{dd} Let $\emptyset\neq Y\subseteq\HStar_f(R)$. Then $\bigvee(Y)\in\HStar_f(R)$ and, for any $A\in \mathbf{HF}(R)$, we have
$$A^{\bigvee(Y)}=\bigcup\{A^{\sigma_1\circ\cdots\circ\sigma_n}\mid\sigma_1,\ldots,\sigma_n\in Y,n\in \mathbb{N}\}.$$
\end{lemma}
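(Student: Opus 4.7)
The plan is to define
$$A^\sharp := \bigcup\{A^{\sigma_1\circ\cdots\circ\sigma_n}\mid \sigma_1,\ldots,\sigma_n\in Y,\ n\in\mathbb{N}\}$$
for each $A\in\mathbf{HF}(R)$, show that $\sharp$ is a finite-type homogeneous star operation, and finally identify $\sharp$ with $\bigvee(Y)$. Both claims of the lemma then fall out simultaneously.

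First I would verify the axioms for $\sharp$. Property $\star_1$ transfers through compositions and the union, and $\star_2$ is immediate from the nonemptiness of $Y$ and monotonicity; that $A^\sharp$ is actually a homogeneous \emph{fractional} ideal rather than an arbitrary homogeneous submodule is forced by the pointwise bound $A^\sharp\subseteq A^{v|_{\mathbf{HF}(R)}}$, the right-hand side being a homogeneous fractional ideal. The delicate axiom is $\star_3$. The key observation is that the defining union is directed: concatenating $\sigma_1\circ\cdots\circ\sigma_n$ with $\tau_1\circ\cdots\circ\tau_m$ produces a composition whose action on $A$ dominates both $A^{\sigma_1\circ\cdots\circ\sigma_n}$ and $A^{\tau_1\circ\cdots\circ\tau_m}$. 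Since each $\sigma_1\in Y$ is of finite type, any finitely generated homogeneous subideal of the union $A^\sharp$ already lies in a single term, so $\sigma_1$ commutes with the union, giving
$$(A^\sharp)^{\sigma_1}=\bigcup_{\tau_1,\ldots,\tau_m}(A^{\tau_1\circ\cdots\circ\tau_m})^{\sigma_1}=\bigcup_{\tau_1,\ldots,\tau_m}A^{\sigma_1\circ\tau_1\circ\cdots\circ\tau_m}\subseteq A^\sharp.$$
Induction on the length of the composition then yields $(A^\sharp)^{\sigma_1\circ\cdots\circ\sigma_n}\subseteq A^\sharp$, whence $(A^\sharp)^\sharp=A^\sharp$.

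Next I would prove $\sharp$ is of finite type. For a homogeneous $a\in A^\sharp$, fix a composition with $a\in A^{\sigma_1\circ\cdots\circ\sigma_n}$ and argue by induction on $n$ that there is a finitely generated homogeneous $B\subseteq A$ with $a\in B^{\sigma_1\circ\cdots\circ\sigma_n}\subseteq B^\sharp$. The base case is the finite-typeness of $\sigma_1$. For the inductive step, the finite-type property of $\sigma_1$ produces a finitely generated homogeneous $B_1\subseteq A^{\sigma_2\circ\cdots\circ\sigma_n}$ with $a\in B_1^{\sigma_1}$; applying the inductive hypothesis to each homogeneous generator of $B_1$ yields finitely generated homogeneous subideals $C_j\subseteq A$ whose sum $B:=\sum_j C_j$ satisfies $B_1\subseteq B^{\sigma_2\circ\cdots\circ\sigma_n}$, and therefore $a\in B_1^{\sigma_1}\subseteq B^{\sigma_1\circ\cdots\circ\sigma_n}$.

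Finally I would show $\sharp=\bigvee(Y)$. Clearly $\sharp\geq\sigma$ for every $\sigma\in Y$. Conversely, if $\tau\in\HStar(R)$ dominates every element of $Y$, then for each $\sigma\in Y$ one has $(A^\tau)^\sigma\subseteq (A^\tau)^\tau=A^\tau$, while $\star_2$ gives the reverse inclusion, so $(A^\tau)^\sigma=A^\tau$. A short induction on $n$ then yields $A^{\sigma_1\circ\cdots\circ\sigma_n}\subseteq A^\tau$, whence $A^\sharp\subseteq A^\tau$ and $\sharp\leq\tau$. The main obstacle I anticipate is the $\star_3$ verification: it rests on the interplay between directedness of the union and the finite-typeness of the constituents, and one must be disciplined about the composition convention so that the nested inductions line up correctly; the remaining steps are essentially bookkeeping along finite compositions.
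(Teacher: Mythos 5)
Your proof is correct, but it takes a genuinely different route from the paper's. The paper proves the lemma by lifting: it replaces $Y$ by $Y'=\{(\e(\star))_f\mid\star\in Y\}\subseteq\Star_f(R)$, invokes the known Anderson--Anderson result \cite[p.~1628]{aa90} that the supremum of a family of finite-type classical star operations is again of finite type and is given by the union over finite compositions, and then observes that $\bigvee(Y)=\bigvee(Y')|_{\mathbf{HF}(R)}$ (using Theorem~\ref{2.4}(1) to identify each restriction $(\e(\star))_f|_{\mathbf{HF}(R)}$ with $\star$), so both the membership in $\HStar_f(R)$ and the composition formula descend to the homogeneous setting. Your argument instead works intrinsically in $\HStar(R)$: you take the candidate formula as the definition of an operation $\sharp$, check the star axioms directly (with the directedness of the union plus the finite-typeness of each $\sigma_1$ doing the work for idempotency, since a finitely generated homogeneous subideal of the directed union already sits inside a single term), prove finite type by induction on the length of the composition, and finally identify $\sharp$ with $\bigvee(Y)$ by the standard least-upper-bound argument. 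In effect you re-derive the Anderson--Anderson composition lemma from scratch in the homogeneous category, which makes the proof self-contained but longer; the paper's use of $\e(\cdot)$ is exactly the device that lets it outsource that combinatorics to the classical literature. Both arguments are sound. One small point worth making explicit in your write-up: that $A^\sharp$ is a fractional ideal (not just a submodule of $R_H$) follows from the bound $\sigma\le v|_{\mathbf{HF}(R)}$ for every $\sigma\in\HStar(R)$, which you can get either directly from the axioms or, as the paper's philosophy suggests, from $\e(\sigma)\le v$.
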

\begin{proof} Let $Y':=\{(\e(\star))_f\mid\star\in Y\}$ which is a subset of $\Star_f(R)$. It follows from \cite[Page 1628]{aa90} that, $\bigvee(Y')$ is a (classical) star operation of finite type, and that for any $F\in \mathbf{F}(R)$, $F^{\bigvee(Y')}=\bigcup\{F^{\sigma'_1\circ\cdots\circ\sigma'_n}\mid\sigma'_1,\ldots,\sigma'_n\in Y',n\in \mathbb{N}\}$. Note that $\bigvee(Y)=\bigvee(Y')|_{\mathbf{HF}(R)}$. Hence $\bigvee(Y)\in\HStar_f(R)$ and $$A^{\bigvee(Y)}=\bigcup\{A^{\sigma_1\circ\cdots\circ\sigma_n}\mid\sigma_1,\ldots,\sigma_n\in Y,n\in \mathbb{N}\},$$ for any $A\in \mathbf{HF}(R)$.
\end{proof}

A topological space $X$ is called a \emph{spectral} space if $X$ is quasi-compact 
and $T_0$, the quasi-compact open subsets of $X$ are closed under finite intersection and form an open basis, and every nonempty irreducible closed subset of $X$ has a generic point (i.e. it is the closure of a unique point). By a theorem of Hochster, these are precisely the topological spaces which arise as the prime spectrum of a commutative ring endowed with the Zariski topology \cite{h69}.

Now we show $\HStar_{f}(R)$ is a spectral space, whose proof is analogous to \cite[Thorem 2.13]{fs14}.

\begin{theorem}\label{spec} The space $\HStar_{f}(R)$ of finite type homogeneous star operations on $R$, endowed with the Zariski topology, is a spectral space.
\end{theorem}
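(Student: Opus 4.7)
The plan is to verify the four conditions of Hochster's theorem for $\HStar_f(R)$: it is $T_0$, quasi-compact, admits a basis of quasi-compact open sets closed under finite intersection, and is sober. The overall scheme mirrors the argument of \cite[Theorem 2.13]{fs14}, exploiting Propositions \ref{T0}, \ref{2.7}, \ref{2.11} and Lemma \ref{dd}.

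The $T_0$ property is inherited from $\HStar(R)$ by Proposition \ref{T0}(2). For the quasi-compactness of $\HStar_f(R)$, I adapt the proof of Proposition \ref{2.11}(2) using the identity $d\in\HStar_f(R)$, which is the minimum of $(\HStar_f(R),\leq)$: given an open cover $\mathcal{U}$, choose $U_0\in\mathcal{U}$ containing $d$; then $U_0$ includes a basic neighborhood $W'_{B_1}\cap\cdots\cap W'_{B_n}$ of $d$, and $d\in W'_{B_k}$ forces $1\in B_k^d=B_k$, whence $W'_{B_k}=\HStar_f(R)$ and $U_0=\HStar_f(R)$. Next, Remark \ref{f} exhibits the basis of sets $W:=W'_{A_1}\cap\cdots\cap W'_{A_n}$ with $A_j\in f_h(R)$, manifestly closed under finite intersections; to prove each such $W$ is quasi-compact, I take the infimum $\mathfrak{i}'\in W_{A_1}\cap\cdots\cap W_{A_n}$ from Proposition \ref{2.11}(1) and pass to $\mathfrak{i}:=(\mathfrak{i}')_f\in\HStar_f(R)$. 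Since the $A_j$ are finitely generated, $A_j^{\mathfrak{i}}=A_j^{\mathfrak{i}'}\ni 1$, so $\mathfrak{i}\in W$; and since $\mathfrak{i}\leq *$ for every $*\in W$, an argument parallel to Proposition \ref{2.11}(2) produces a one-element subcover of any open cover of $W$.

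The central difficulty is sobriety. Given a nonempty irreducible closed $C\subseteq\HStar_f(R)$, the candidate generic point is $*_C:=\bigvee(C)$, which lies in $\HStar_f(R)$ by Lemma \ref{dd}. Once $*_C\in C$ is established, Proposition \ref{T0}(1) together with the inequalities $\sigma\leq *_C$ for every $\sigma\in C$ yields $C=\Ad(\{*_C\})\cap\HStar_f(R)$; uniqueness of the generic point follows from $T_0$. To prove $*_C\in C$, I argue by contradiction: pick a basic open $V=W'_{B_1}\cap\cdots\cap W'_{B_m}$ with $*_C\in V$ and $V\cap C=\emptyset$; irreducibility of $C$ forces some $W'_{B_k}\cap C=\emptyset$; and by Lemma \ref{dd}, the containment $1\in B_k^{*_C}=\bigcup\{B_k^{\sigma_1\circ\cdots\circ\sigma_n}\mid\sigma_i\in C\}$ produces $\sigma_1,\ldots,\sigma_n\in C$ with $1\in B_k^{\sigma_1\circ\cdots\circ\sigma_n}$.

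The technically central step is then to convert this iterated composition into a single element of $C$ lying in $W'_{B_k}$. The plan is to ``unfold'' the composition recursively using the finite-type property of each $\sigma_j$: at each intermediate closure $B_k^{\sigma_1\circ\cdots\circ\sigma_j}$, a finitely generated homogeneous subideal witnesses membership at the next step, giving rise to a finite tree whose nodes are subbasic opens $W'_{A'}$, each of which contains the corresponding $\sigma_j$ and is therefore nonempty in $C$. By iterated use of irreducibility, the finite intersection of these nonempty opens in $C$ is nonempty, hence there is a common $\sigma\in C$ lying in all of them. A reverse induction along the tree, using at each node the implication ``all generators of $Y$ lie in $B_k^\sigma$ implies $Y^\sigma\subseteq B_k^\sigma$,'' then shows that $1\in B_k^\sigma$, yielding $\sigma\in W'_{B_k}\cap C$ and contradicting the earlier emptiness. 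This recursive unfolding of compositions into single finite-type homogeneous star operations is the main obstacle and the technical heart of the proof.
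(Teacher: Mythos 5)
Your proposal is correct, but it takes a genuinely different route from the paper. The paper does not verify Hochster's axioms directly: it invokes the ultrafilter criterion of Finocchiaro \cite[Corollary 3.3]{f14} for the subbasis $\{W'_A\mid A\in f_h(R)\}$ and, for an ultrafilter $\mathcal{U}$ on $\HStar_f(R)$, exhibits the explicit candidate $\star:=\bigvee(\{\bigwedge(W'_A)\mid W'_A\in\mathcal{U}\})$ (of finite type by Propositions \ref{2.7}, \ref{2.11} and Lemma \ref{dd}) and checks $\star\in W'_A\Leftrightarrow W'_A\in\mathcal{U}$; there the iterated compositions coming from Lemma \ref{dd} are handled at once because any $\sigma\in\bigcap_i W'_{A_i}$ dominates each $\bigwedge(W'_{A_i})$, so $A^{\bigwedge(W'_{A_1})\circ\cdots\circ\bigwedge(W'_{A_n})}\subseteq A^{\sigma}$. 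You instead verify Hochster's conditions one by one: quasi-compactness via the minimum $d$, a basis of quasi-compact opens via the $(\bigwedge(\cdot))_f$ trick (note the harmless degenerate case of an empty basic open, and that a basis of quasi-compact opens closed under finite intersections automatically forces all quasi-compact opens to be closed under finite intersections), and sobriety with generic point $\bigvee(C)$. Since the operations $\sigma_1,\ldots,\sigma_n\in C$ furnished by Lemma \ref{dd} need not be comparable, you must ``unfold'' $1\in B_k^{\sigma_1\circ\cdots\circ\sigma_n}$ through finite-type witnesses; your sketch of this is sound: the intermediate closures lie in $\mathbf{HF}(R)$, the witnesses can be taken finitely generated and homogeneous, each membership $a\in J_a^{\sigma_j}$ translates into $\sigma_j\in W'_{a^{-1}J_a}$ with $a^{-1}J_a\in f_h(R)$, irreducibility of $C$ yields a common $\sigma\in C$ in the finitely many subbasic opens, and the reverse induction $Y\subseteq B_k^{\sigma}\Rightarrow Y^{\sigma}\subseteq B_k^{\sigma}$ gives $1\in B_k^{\sigma}$, the desired contradiction. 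What your approach buys is independence from the ultrafilter machinery of \cite{f14}, at the price of this extra combinatorial unfolding lemma; the paper's route is shorter once \cite[Corollary 3.3]{f14} is granted, and both arguments ultimately rest on the same ingredients, namely Propositions \ref{T0}, \ref{2.7}, \ref{2.11} and Lemma \ref{dd}.
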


\begin{proof} In order to prove that a topological space $X$ is a spectral space, we use the characterization given in \cite[Corollary 3.3]{f14}. We recall that if $\beta$ is a non empty family of subsets of $X$, for a given subset $Y$ of $X$ and an ultrafilter $\mathcal{U}$ on $Y$, we set $$Y_{\beta}(\mathcal{U}):=\{x\in X\mid[\text{for each }B\in\beta\text{, it happens that }x\in B\Leftrightarrow B\cap Y\in \mathcal{U}]\}.$$
By \cite[Corollary 3.3]{f14}, for a topological space $X$ being a spectral space is equivalent to $X$ being a $T_0$-space having a subbasis $\mathcal{S}$ for the open sets such that $X_{\mathcal{S}}(\mathcal{U})\neq\emptyset$, for every ultrafilter $\mathcal{U}$ on $X$.

We know $X:=\HStar_{f}(R)$ is a $T_0$-space by Proposition \ref{T0}(2), and set $\mathcal{S}:=\{W'_A\mid A\in f_h(R)\}$ be the canonical subbasis of the Zariski topology on $X$. Let $\mathcal{U}$ be an ultrafilter on $X$. It suffices to show the set $$X_{\mathcal{S}}(\mathcal{U}):=\{\star\in X\mid[\text{for each }W'_A\in\mathcal{S}\text{, it happens that } \star\in W'_A\Leftrightarrow W'_A\in\mathcal{U}]\}$$ is nonempty. By Propositions \ref{2.7} and \ref{2.11}, any homogeneous star operation of the form $\bigwedge(W'_A)$ (where $A\in f_h(R)$) belongs to $\HStar_f(R)$. Thus the homogeneous star operation $$\star:=\bigvee(\{\bigwedge(W'_A)\mid W'_A\in\mathcal{U}\})\in\HStar_f(R)=X.$$ We claim that $\star\in X_{\mathcal{S}}(\mathcal{U})$. Fix a finitely generated homogeneous fractional ideal $A$ of $R$. It suffices to show $\star\in W'_A\Leftrightarrow  W'_A\in\mathcal{U}$. First, assume $\star\in W'_A$, i.e., $1\in A^{\star}$. By Lemma \ref{dd}, there exist finitely generated fractional ideals $A_1,\ldots,A_n$ of $R$ such that $1\in A^{\bigwedge(W'_{A_1})\circ\cdots\circ\bigwedge(W'_{A_n})}$ and $W'_{A_i}\in \mathcal{U}$, for any $i=1,\ldots,n$. Take a homogeneous star operation $\sigma\in\cap_{i=1}^nW'_{A_i}$. By definition, $\sigma\geq\bigwedge(W_{A_i})$, for $i=1,\ldots,n$, and thus $$1\in A^{\bigwedge(W'_{A_1})\circ\cdots\circ\bigwedge(W'_{A_n})}\subseteq A^{\sigma\circ\cdots\circ\sigma}=A^{\sigma},$$ thus $\sigma\in W'_A$. This shows that $\cap_{i=1}^nW'_{A_i}\subseteq W'_A$ and thus, by definition of ultrafilter, $W'_A\in\mathcal{U}$, since $W'_{A_1},\ldots,W'_{A_n}\in\mathcal{U}$. Conversely, assume that $W'_A\in\mathcal{U}$. This implies that $\bigwedge(W'_A)\leq\star$. By definition, $1\in A^{\sigma}$, for each $\sigma\in W'_A$, and thus $$1\in\bigcap_{\sigma\in W'_A}A^{\sigma}=:A^{\bigwedge(W'_A)}\subseteq A^{\star}.$$ This completes the proof.
\end{proof}

\begin{corollary}
Let $D$ be an integral domain. The space $\Star_{f}(D)$ of finite type (classical) star operations on $D$, endowed with the Zariski topology, is a spectral space.
\end{corollary}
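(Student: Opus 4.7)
The plan is to deduce the corollary from Theorem \ref{spec} by regarding $D$ as a trivially graded domain. Fix any nonzero torsionless commutative cancellative monoid $\Gamma$ (for concreteness take $\Gamma = \mathbb{Z}$), and endow $D$ with the $\Gamma$-grading $D_{0} = D$ and $D_{\alpha} = 0$ for every $\alpha \neq 0$. Under this grading, the set of nonzero homogeneous elements is $H = D \setminus \{0\}$, and the homogeneous quotient field $D_{H}$ coincides with the full quotient field $K$ of $D$, since every element of $K$ is homogeneous of degree $0$.

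The first step is to observe that every nonzero fractional ideal of $D$ is automatically homogeneous in this grading, so $\mathbf{HF}(D) = \mathbf{F}(D)$. Next, since $D_{H} \setminus \{0\} = K \setminus \{0\}$, the three axioms $\star_{1}$, $\star_{2}$, $\star_{3}$ defining a homogeneous star operation on the graded domain $D$ become identical to those defining a classical star operation on $D$. Hence $\HStar(D) = \Star(D)$ and, passing to finite-type maps, $\HStar_{f}(D) = \Star_{f}(D)$ as sets. Finally, because the two subbases $\{W_{A} \mid A \in \mathbf{HF}(D)\}$ and $\{U_{F} \mid F \in \mathbf{F}(D)\}$ are indexed by the same family of ideals and specify the same subbasic open sets, the Zariski topologies on $\HStar_{f}(D)$ and $\Star_{f}(D)$ coincide.

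With this identification in place, the corollary becomes an immediate application of Theorem \ref{spec}. There is no genuine obstacle here; all of the topological work has already been carried out in Theorem \ref{spec}, and what remains is only the bookkeeping needed to reduce the ungraded setting to the trivially graded one so that the earlier theorem can be invoked verbatim.
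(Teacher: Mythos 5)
Your proof is correct and follows essentially the same strategy as the paper: regard $D$ as a trivially graded integral domain so that $\HStar_f(D) = \Star_f(D)$ (with matching topologies), then invoke Theorem \ref{spec}. The one noteworthy difference is that the paper grades $D$ by $\Gamma = \{0\}$, which technically conflicts with the standing hypothesis that $\Gamma$ be a \emph{nonzero} torsionless cancellative monoid, whereas your choice of $\Gamma = \mathbb{Z}$ with $D$ concentrated in degree $0$ respects that hypothesis and so is actually the cleaner reduction; the bookkeeping you carry out ($H = D \setminus \{0\}$, $D_H = K$, $\mathbf{HF}(D) = \mathbf{F}(D)$, identification of the subbases $W_A$ and $U_F$) is exactly what is needed and is correct.
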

\begin{proof} The domain $D$ is trivially a $\Gamma$-graded domain with $\Gamma=\{0\}$. Therefore in this case $\HStar_f(D)=\Star_f(D)$, and hence $\Star_f(D)$ is a spectral space by Theorem \ref{spec}.
\end{proof}

\begin{center} {\bf ACKNOWLEDGMENT}

\end{center}
I would like to thank the onymous referee for his/her insightful report on this paper. On behalf of all authors, the corresponding author states that there is no conflict of interest.

\end{document}